\documentclass[onefignum,onetabnum]{siamonline220329}

\usepackage{amsfonts}
\usepackage{amssymb}
\usepackage{amsmath}
\usepackage{hyperref}
\usepackage{booktabs} 
\usepackage{caption} 
\usepackage{subcaption} 
\usepackage{graphicx}
\usepackage{pgfplots}
\usepackage[all]{nowidow}
\usepackage[utf8]{inputenc}
\usepackage{tikz}
\usetikzlibrary{er,positioning,bayesnet}
\usepackage{multicol}
\usepackage{algpseudocode,algorithm,algorithmicx}

\usepackage{epstopdf}
\ifpdf
  \DeclareGraphicsExtensions{.eps,.pdf,.png,.jpg}
\else
  \DeclareGraphicsExtensions{.eps}
\fi



\newsiamremark{assumption}{Assumption}
\newsiamremark{remark}{Remark}

\makeatletter
\AtBeginDocument{
  \def\refstepcounter@optarg[#1]#2{%
    \cref@old@refstepcounter{#2}%
    \cref@constructprefix{#2}{\cref@result}%
    \@ifundefined{cref@#1@alias}%
    {\def\@tempa{#1}}%
    {\def\@tempa{\csname cref@#1@alias\endcsname}}%
    \protected@edef\cref@currentlabel{%
      [\@tempa][\arabic{#2}][\cref@result]%
      \csname p@#2\endcsname\csname the#2\endcsname}}
}
\makeatother

\hyphenation{Helm-holtz}

\newcommand{\fa}{\mathbb{\forall}}

\newcommand{\R}{\mathbb{R}}
\newcommand{\N}{\mathbb{N}}

\newcommand{\Ra}{\Rightarrow} 
\newcommand{\subspace}{\hookrightarrow}

\newcommand{\C}{\mathbb{C}}

\newcommand{\cl}{\overline}

\newcommand{\weakto}{\rightharpoonup}
\newcommand{\bd}{\partial}



\headers{Variational Lippmann Schwinger equation}{A. Tataris and A.V. Mamonov}

\title{A variational Lippmann-Schwinger-type approach for the Helmholtz impedance problem on bounded domains
\thanks{Submitted to SIMA.
\funding{A. V. Mamonov was supported by the U.S. National Science
Foundation under award DMS-2309197. This material is based upon research supported in
part by the U.S. Office of Naval Research under award number N00014-21-1-2370 to A. V. Mamonov.}}}

\author{Andreas Tataris\thanks{Delft University of Technology, The Netherlands 
  (\email{A.Tataris@tudelft.nl}).} 
\and Alexander V. Mamonov\thanks{Department of Mathematics, University of Houston, Houston, TX, USA 
  (\email{avmamonov@uh.edu}).}}

\usepackage{amsopn}

\ifpdf
\hypersetup{
  pdftitle={Inverse scattering for Schr\"{o}dinger equation via ROM}{A. Tataris, T. van Leeuwen, and A.V. Mamonov},
  pdfauthor={A. Tataris, T. van Leeuwen, and A.V. Mamonov}
}
\fi




\begin{document}

\maketitle

\begin{abstract}
Recently, reduced order modeling methods have been applied to solving inverse boundary value problems arising in frequency domain scattering theory.
    A key step in projection-based reduced order model methods is the use of a sesquilinear form associated with the forward boundary value problem. 
    However, in contrast to scattering problems posed in $\R^d$, boundary value formulations lose certain structural properties, most notably the classical Lippmann-Schwinger integral equation is no longer available. 
    In this paper we derive a Lippmann-Schwinger type equation aimed at studying the solution of a Helmholtz  boundary value problem with a variable refractive index and impedance boundary conditions. In particular, we start from the variational formulation of the boundary value problem and we obtain an equivalent operator equation which can be viewed as a bounded domain analogue of the classical Lippmann-Schwinger equation. We first establish analytical properties of our variational Lippmann-Schwinger type operator. Based on these results, we then show that the parameter-to-state map, which maps a refractive index to the corresponding wavefield, maps weakly convergent sequences to strongly convergent ones when restricted to refractive indices in Lebesgue spaces with exponent greater than 2.
    Finally, we use the derived weak to strong sequential continuity to show existence of minimizers for a reduced order model based optimization methods aimed at solving the inverse boundary value problem as well as for a conventional data misfit based waveform inversion method.
\end{abstract}

\begin{keywords}
Helmholtz equation, Lippmann-Schwinger equation, 
frequency domain,
model order reduction.
\end{keywords}

\begin{MSCcodes}
46N20, 35J05, 49J20
\end{MSCcodes}


\section{Introduction}
Time-harmonic wave scattering, arises in a variety of physical settings, including acoustic, electromagnetic and elastic wave propagation, see \cite{Colton1992InverseAA,devaney2012mathematical, tataris2022regularised, GLM_Helmholtz_Andreas}.
Scattering problems posed on the entire space $\R^d, \ d=2,3$ are typically treated via the so called Lippmann-Schwinger integral equation \cite{Colton1992InverseAA}. The Lippmann-Schwinger equation provides a structured framework and therefore great flexibility in studying the solutions of the forward scattering problem as well as studying the inverse scattering problem. In contrast, when we pose the same scattering problem on a bounded domain then the classical Lippmann-Schwinger equation is no longer available. This creates challenges in studying forward and inverse boundary value problems unlike the $\R^d$ scattering case.

In this paper, we consider the scattering of time-harmonic waves, modeled by the Helmholtz equation with impedance boundary conditions on a bounded domain $\Omega \subset \R ^2$. The problem is formulated as
\begin{align}
    (-\Delta -k^2 m)u=f,\text{ in } \Omega
    \label{eq: Helm},
\end{align}
\begin{align}
    (\partial_n -\imath k )u=0 
    , \text{ on } \bd \Omega,
    \label{eq: BC}
\end{align}
for a given wavenumber $k>0$, volume source $f$ and refractive index $m$.
This boundary value formulation can be viewed as the bounded-domain analogue of the classical Helmholtz scattering problem with Sommerfeld radiation condition in $\R^2$. 
In the $\R^2$ setting, one seeks for a total field $u$ that satisfies the (classical) Lippmann–Schwinger integral equation
\begin{align}
    u(x)=u^{inc}(x)+k^2\int_{\R^2} \Phi_k(x,y) u(y)m(y)dy, \ x\in \R^{2} ,
    \label{eq: classical LS of intro}
\end{align}
where $u^{inc}$ is an incident field and $\Phi_k$ is the fundamental solution of the unperturbed Helmholtz operator, $-\Delta-k^2$.
The forward scattering problem is therefore equivalent to solving a Fredholm equation of the second kind for the total field $u(x), \ x\in \R^d$. 
Working within this classical Lippmann–Schwinger framework offers several advantages when it comes to studying analytical properties of the solutions of the forward scattering problem. In particular, in \cite{lechleiter2013tikhonov} has been shown that the volume integral formulation of the scattering problem yields weak sequential compactness and closedness of the parameter-to-wavefield map. These properties allow to show well posedness results of full waveform inversion type reconstruction methods aimed to solve the inverse scattering problem.

Recently, data-driven reduced order models (ROM) have emerged as powerful tools used for waveform inversion on bounded domains, see \cite{borcea2020reduced,borcea2021reduced,borcea2023waveform1,borcea2023waveform2,borcea2024data,borcea2025reduced,druskin2016direct,druskin2018nonlinear,tataris2023reduced,tataris2025inversescatteringschrodingerequation}.
These methods typically involve assembling misfit functionals using volume inner products of wavefields and require the variational (weak) formulation of the forward problem. 
However, the classical Lippmann–Schwinger equation is not applicable in the bounded domain setting. This motivates the need for an analogue of the Lippmann-Schwinger equation tailored for bounded domains. 
Our aim is to construct such a framework, enabling analysis of the parameter-to-wavefield map in the spirit of \cite{lechleiter2013tikhonov}, and providing a theoretical foundation for ROM-based inversion methods.
In an attempt to obtain an integral equation setup for the bounded domain setting described by the boundary value problem \eqref{eq: Helm}-\eqref{eq: BC}, one has to make use of the integral representation of smooth functions, see \cite{Colton1992InverseAA}. 
Assuming a sufficiently smooth solution, $u,$ of the boundary value problem 
\eqref{eq: Helm}-\eqref{eq: BC}, we obtain the following integral representation 
\begin{align}
    u(x)=\int_{\bd \Omega}  \big(\imath k   \Phi_k(x,y)- \bd_n \Phi_k(x,y)\big) u(y )ds(y)+ \nonumber \\ \int_{\Omega} \big(k^2q(y) u(y)+f(y)\big)  \Phi_k(x,y) dy, \ x
\in 
\Omega,
\label{eq: IE1}
\end{align}
with $m=1+q.$
This integral equation can be extended to the boundary $\bd \Omega$ using the extension properties of the boundary integral operators (single and double layer potentials). This extension leads to the following integral equation
\begin{align}
    u(x)=\int_{\bd \Omega}  \Big(\imath k   \Phi_k(x,y)- \bd_n \Phi_k(x,y)+\frac{1}{2}\Big) u(y )ds(y)+ \nonumber \\ \int_{\Omega} \Big(k^2q(y) u(y)+f(y)\Big)  \Phi_k(x,y) dy, \ x
\in 
\bd\Omega.
\label{eq: IE2}
\end{align}
The coupled system of integral equations \eqref{eq: IE1} and \eqref{eq: IE2} is equivalent to the original boundary value problem \eqref{eq: Helm}-\eqref{eq: BC}. 
One possible approach for studying the parameter-to-wavefield map is to work with this coupled system, and follow an analysis in the spirit of \cite{lechleiter2013tikhonov}.
However, showing analytical properties and well‐posedness for the coupled system of integral equations \eqref{eq: IE1}-\eqref{eq: IE2}, is far from straightforward. In particular, one must show that this coupled system  forms a bounded, invertible operator on appropriate function spaces. To the best of our knowledge, showing well posedness for this system remains an open problem.

In this paper we propose an alternative way to study the functional analytic properties of the nonlinear parameter-to-solution map of the Helmholtz boundary value problem \eqref{eq: Helm}-\eqref{eq: BC}. In particular, we derive an analogue of the Lippmann-Schwinger equation  but in a variational sense. We begin from the weak formulation of the Helmholtz problem and we derive an equivalent Fredholm linear operator equation. 
We achieve that by applying the inverse of the operator that describes the weak formulation of \eqref{eq: Helm}-\eqref{eq: BC} when $m=1$, to the operator equation that corresponds to the weak formulation of the Helmholtz boundary value problem with variable $m$. Following this, we study the analytical properties of the resulting linear operators that form a Lippmann-Schwinger type equation. Using the derived results we establish analytic properties of the parameter-to-state map such as its weak-to-strong sequential continuity. Finally, we investigate well posedness results of optimization problems associated to conventional full waveform inversion or reduced order model (ROM) inversion for solving the inverse boundary value problem.

This paper is organized as follows. We begin with Section \ref{sec: Prelims} where we formulate the Helmholtz impedance boundary value problem and we state our assumptions on the domain of interest and on the refractive index. In Section \ref{sec: main results} we present our main contributions. We begin with deriving the variational Lippmann-Schwinger equation and we show analytical properties of associated operators. Based on these we show sequential weak-to-strong continuity of the parameter-to-wavefield map.
We then use the derived continuity property of the solution map to show existence of minimizers for  reduced order model based and conventional full waveform inversion methods for solving the inverse boundary value problem. We conclude the paper with an outlook and conclusion section.

\section{Preliminaries \label{sec: Prelims}}
In this section, we review some basic properties of the Helmholtz impedance boundary value problem. In particular, we give its variational formulation and we review some regularity properties of the solution.  

For reader's convenience we explain the notation we use. With $H^s(\Omega)$ we denote the usual Sobolev space with index $s$, and with $L^p(\Omega)$ the Lebesgue space of exponent $p.$ With $\overline{H^1(\Omega)}'$ we denote the anti-dual of the Sobolev space $H^1(\Omega)$, see \cite{Treves1967}. We denote the action of an element $f\in \overline{H^1(\Omega)}'$ on $\phi \in H^1(\Omega)$ as $\langle f, \phi\rangle \in \C.$ The embedding operator between two function spaces will be denoted as $id.$ 

Before starting we make the following assumptions.
\begin{assumption}
    \begin{enumerate}
        \item The set $\Omega$ is bounded and sufficiently  smooth, at least $C^1$, such that the following embedding between Sobolev spaces is compact 
        \begin{align}
            id:H^2(\Omega) \subspace H^1(\Omega).
        \end{align}
        \item We take $m=1+q$, with $q \in L^p(\Omega;[-1,\infty))$ for some (fixed) $ p \in(2,\infty].$ 
    \end{enumerate}
    \label{assumption}
\end{assumption}

\subsection{Helmholtz impedance boundary value problem and $H^2-$regularity estimates \label{sub: BVP and regularity}}

Keeping in mind the requirements of assumption \ref{assumption}, given a wavenumber $k>0$ and a distribution $f\in \overline{H^1(\Omega)}'$, the boundary value problem \eqref{eq: Helm}-\eqref{eq: BC}
admits a variational (weak) formulation of 
finding $u\in H^1(\Omega)$ such that
\begin{align}
    \fa v\in H^1(\Omega) : \ \int_\Omega \nabla  u \cdot \cl{\nabla v}dx -k^2  \int_\Omega   m u\cl{ v}dx -\imath k \int_{\bd \Omega}    u  \cl{ v}d\Sigma(x) = \langle f,v\rangle .
    \label{eq: weak form}
\end{align}
The existence and uniqueness of  the solution of
\eqref{eq: weak form} is guaranteed by the following result, see \cite{Wald2018}.
\begin{proposition}
Given a wavenumber $k>0, \ f\in\overline{H^1(\Omega)}' $ and $m$ according to assumption \ref{assumption}, the problem \eqref{eq: weak form} admits a unique solution.
\label{pr: forward problem well posedness}
\end{proposition}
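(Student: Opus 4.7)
The plan is to apply the Fredholm alternative to the sesquilinear form on the left-hand side of \eqref{eq: weak form}. Denote this form by $a_m(u,v)$ and introduce the shifted reference form $a_0(u,v) := \int_\Omega \nabla u \cdot \cl{\nabla v}\, dx + \int_\Omega u\, \cl v\, dx$, which is trivially continuous and coercive on $H^1(\Omega)$ and therefore, by the Lax--Milgram theorem, induces an isometric isomorphism $A_0 : H^1(\Omega) \to \cl{H^1(\Omega)}'$. Writing the operator associated with $a_m$ as $A_m = A_0 - B$, where $B$ is determined by
\[
b(u,v) := a_0(u,v) - a_m(u,v) = \int_\Omega (1+k^2 m)\, u\, \cl v\, dx + \imath k \int_{\bd\Omega} u\, \cl v\, d\Sigma(x),
\]
the variational problem becomes equivalent to the equation $(I - A_0^{-1} B)\, u = A_0^{-1} f$ in $H^1(\Omega)$.

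Next, I would verify that $A_0^{-1} B$ is compact. For the volume piece, since $q \in L^p(\Omega)$ with $p > 2$ and $\Omega \subset \R^2$, the Rellich--Kondrachov theorem gives a compact embedding $H^1(\Omega) \subspace L^r(\Omega)$ for every $r<\infty$; Hölder's inequality then makes $u \mapsto (1+k^2 m) u$ a compact map from $H^1(\Omega)$ into an $L^s(\Omega)$ that embeds continuously into $\cl{H^1(\Omega)}'$. The boundary piece is compact because, under the $C^1$-regularity of $\bd\Omega$ stipulated by Assumption \ref{assumption}, the trace operator $H^1(\Omega) \to L^2(\bd\Omega)$ is compact. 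Consequently $I - A_0^{-1} B$ is Fredholm of index zero, and existence of a solution reduces to uniqueness.

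For uniqueness, set $f = 0$ and test \eqref{eq: weak form} with $v = u$. Since $m$ is real-valued, the volume terms are real, so taking imaginary parts yields $k \int_{\bd\Omega} |u|^2\, d\Sigma(x) = 0$, whence $u|_{\bd\Omega} = 0$. The impedance condition \eqref{eq: BC} then forces $\bd_n u = \imath k u = 0$ on $\bd\Omega$. Extending $u$ by zero across $\bd\Omega$ produces an $H^1$ function on a slightly larger domain satisfying $-\Delta u - k^2 m u = 0$ weakly (with $m$ extended by any bounded value) and vanishing on an open set, so unique continuation for Helmholtz-type operators with an $L^p$ potential, $p>2$ in two dimensions, forces $u \equiv 0$ and closes the proof.

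The principal technical difficulty is this last unique continuation step. The Lax--Milgram/Fredholm bookkeeping, the Hölder--Sobolev argument for the volume multiplier, and the imaginary-part trick at the boundary are all routine once Assumption \ref{assumption} is in place, but because $m$ is only Lebesgue integrable rather than continuous, unique continuation must be invoked at the exact $p>2$ threshold built into Assumption \ref{assumption}, via a Carleman or Jerison--Kenig type estimate, rather than through the simpler arguments available for bounded or smooth potentials.
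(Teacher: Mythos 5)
Your proof is correct, and it follows the same high-level strategy as the paper (coercive part plus compact perturbation, then the Fredholm alternative), but the two arguments differ in where they put the work. The paper absorbs the boundary term into the coercive form, taking $a_1(u,v)=\int_\Omega \nabla u\cdot\cl{\nabla v}\,dx-\imath k\int_{\bd\Omega}u\cl v\,d\Sigma$ as the invertible reference operator (coercivity here uses that $\|\nabla\cdot\|_{L^2(\Omega)}^2+\|\cdot\|_{L^2(\bd\Omega)}^2$ is an equivalent norm on $H^1(\Omega)$ together with $|x-\imath y|\gtrsim |x|+|y|$), so that the compact remainder is only the volume term $-\int_\Omega m u\cl v\,dx$, handled through the compact embedding $H^1(\Omega)\subspace L^2(\Omega)$. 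You instead take the plain $H^1$ inner product as the reference and must additionally show that the boundary term is compact, which you correctly do via compactness of the trace $H^1(\Omega)\to L^2(\bd\Omega)$; your Hölder--Sobolev treatment of the $L^p$, $p>2$, multiplier matches the paper's Proposition~\ref{prop: bounded a_2}. The more substantive difference is injectivity: the paper merely asserts that $\mathcal I+k^2\mathcal A$ is one-to-one and defers to the cited references, whereas you supply the standard argument (imaginary part of $a_m(u,u)=0$ kills the boundary trace, both Cauchy data vanish, extend by zero, invoke weak unique continuation for $-\Delta-k^2m$ with $m\in L^p$, $p>2$, in two dimensions). That last step is indeed the genuine technical content, and your identification of it as requiring a Carleman/Jerison--Kenig type estimate at the $p>2$ threshold is accurate; in this respect your write-up is more self-contained than the paper's, at the cost of leaning on a nontrivial external theorem.
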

\begin{proof}
Here we give a brief sketch of the proof.
For a more detailed sketch of the proof  we refer to the Appendix \ref{App: Well posedness}, and full details can be found in \cite{Bao_2005,Wald2018}. 
Using the weak formulation of the PDE we define the forms
\begin{align}
a_1(u,v)=\int_\Omega \nabla u  \cdot \overline{\nabla v}dx-\imath k  \int_{\bd \Omega} u\overline{v}d\Sigma (x), \ u,v\in H^1(\Omega),
\end{align}
and
\begin{align}
    a_2(u,v)=-\int_\Omega mu \overline{v}dx,\ u,v\in H^1(\Omega).
\end{align}
Through the forms $a_1,a_2$ we define operators 
$$\mathcal T:H^1(\Omega)\to H^1(\Omega) \text{ as } (\mathcal Tu,v)_{H^1}=a_1(u,v), \ u,v\in H^1(\Omega)$$
and $$\mathcal W: L^2(\Omega) \to \overline{H^1(\Omega)}', \ u\stackrel{\mathcal W }{\mapsto} a_2(u,\cdot).$$
In short, the Helmholtz boundary value problem can be reduced to the following linear problem of finding $u\in H^1(\Omega)$ that satisfies 
\begin{align}
    \Phi \mathcal {T} ({I}+k^2\mathcal{A})u=f ,   \text{ in } \overline{H^1(\Omega)}',
\end{align}
with $\Phi:H^1(\Omega)\to \overline{H^1(\Omega)}',$ being the linear Riesz isomorphism
and $$\mathcal A:=\mathcal T^{-1}\Phi^{-1} \mathcal W \  id_{H^1\to L^2}:H^1(\Omega) \to H^1(\Omega),$$
where $id_{H^1\to L^2}$ is the compact imbedding operator of $H^1(\Omega)$ into $L^2(\Omega).$
\end{proof}
We quickly note that despite having $m \in L^p(\Omega),$ $a_2$ will still be a bounded form on $H^1(\Omega) ,$ see proposition \ref{prop: bounded a_2} in Appendix \ref{App: Well posedness}.   
\begin{remark}
We can express relation \eqref{eq: weak form} equivalently as
\begin{align}
    (\mathcal S-k^2 \mathcal M_m -\imath k\mathcal B)u= f,
    \label{distibutional Helm}
\end{align}
as an $\overline{H^1(\Omega)}'$ distributional relation.
In particular,
\begin{align}
    \mathcal S: H^1(\Omega) \to \overline{H^1(\Omega)}'
\end{align}
acts as 
\begin{align}
    \langle\mathcal S y, \phi \rangle=(\nabla y ,\nabla \phi)_{L^2(\Omega;\C^2)},
\end{align}
and we define $\mathcal M_m$ and $ \mathcal B$ similarly as
\begin{align}
    \langle\mathcal B y, \phi \rangle=(y , \phi)_{L^{2}(\bd \Omega)},
\end{align}
and 
\begin{align}
     \langle\mathcal M_m y, \phi \rangle=(m y , \phi)_{L^2(\Omega)} .
\end{align}
It is clear that we can write 
\begin{align}
    \mathcal{S}-k^2\mathcal M_m-\imath k \mathcal B=\Phi \mathcal T (  I+k^2\mathcal A),
    \label{rel: equivalent writing}
\end{align}
where it follows that $\mathcal{S}-k^2\mathcal M_m-\imath k \mathcal B: H^1(\Omega)\to \overline{H^1(\Omega)'}$ is injective, invertible and has a bounded inverse, since $\Phi,  \mathcal T,    I+k^2\mathcal A$ are invertible with bounded inverses.
\label{rem: equivalent writing}
\end{remark}
It is worth noting (and particularly useful for the later analysis) that in the case of $q=0, f \in L^2(\Omega)$  we obtain by \cite{cummings2006} the following $H^2-$estimate.
\begin{proposition}
\label{prop: H^2 estimate}
    Assuming that $m=1, \ (q=0),$ and since  $\Omega$ is sufficiently smooth, we obtain that the solution of the Helmholtz boundary value problem
    \begin{align*}
        (-\Delta-k^2)u=f, \text{ in } \Omega,
    \end{align*}
    \begin{align*}
        (\bd_n-\imath k)u=0, \text{ on } \bd \Omega,
    \end{align*}
   obeys the following $H^2-$estimate,
    \begin{align}
        \|u\|_{H^2(\Omega)}\leq C\Big(k+1+\frac{1}{k}+\frac{1}{k^2}\Big) \|f\|_{L^2(\Omega)},
    \end{align}
    for a fixed $C>0.$
\end{proposition}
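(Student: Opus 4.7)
The plan is to first establish an a priori $H^1$ estimate with explicit $k$-dependence, and then bootstrap to the $H^2$ bound via classical elliptic regularity applied to the reformulated strong problem $-\Delta u = f + k^2 u$ with Neumann-type data $\bd_n u = \imath k u$. The whole argument rests on two ingredients: a Rellich/Morawetz multiplier identity to control $\|\nabla u\|_{L^2}$ and $k\|u\|_{L^2}$, and standard $H^2$-regularity for the Laplacian on a sufficiently smooth bounded domain.

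I would begin by testing the weak formulation \eqref{eq: weak form} with $v = u$ (and $m=1$). The imaginary part yields $k\|u\|_{L^2(\bd\Omega)}^2 = -\operatorname{Im}\langle f, u\rangle$, giving a first bound on the trace of $u$; the real part, $\|\nabla u\|_{L^2}^2 - k^2\|u\|_{L^2}^2 = \operatorname{Re}\langle f, u\rangle$, does not by itself separate the two terms. To disentangle them I would test with the multiplier $v = \beta \cdot \nabla u$, where $\beta$ is a $C^1$ vector field on $\cl{\Omega}$ agreeing with the outer unit normal on $\bd\Omega$ (for star-shaped $\Omega$ one can simply take $\beta(x) = x - x_0$). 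Integrating by parts and using the impedance BC to replace $\bd_n u$ by $\imath k u$ yields a Rellich-type identity which, after absorbing the boundary mass and tangential gradient terms with the help of the first step and Young's inequality, produces a combined $H^1$ estimate of the form
\begin{equation*}
\|\nabla u\|_{L^2(\Omega)}^2 + k^2 \|u\|_{L^2(\Omega)}^2 \leq C(1 + k^{-2})\|f\|_{L^2(\Omega)}^2,
\end{equation*}
and in particular $\|u\|_{L^2} \leq C(k^{-1}+k^{-2})\|f\|_{L^2}$ together with $\|u\|_{H^1} \leq C(1+k^{-1})\|f\|_{L^2}$.

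The final step is to reinterpret \eqref{eq: Helm}--\eqref{eq: BC} as the inhomogeneous Neumann problem $-\Delta u = f + k^2 u$ in $\Omega$, $\bd_n u = \imath k u$ on $\bd\Omega$, and invoke the standard elliptic regularity estimate
\begin{equation*}
\|u\|_{H^2(\Omega)} \leq C\bigl(\|{-}\Delta u\|_{L^2(\Omega)} + \|\bd_n u\|_{H^{1/2}(\bd\Omega)} + \|u\|_{H^1(\Omega)}\bigr),
\end{equation*}
valid under the smoothness hypothesis on $\bd\Omega$ in Assumption \ref{assumption}. Estimating $\|{-}\Delta u\|_{L^2} \leq \|f\|_{L^2} + k^2\|u\|_{L^2}$ and, by the trace inequality, $\|\bd_n u\|_{H^{1/2}(\bd\Omega)} = k\|u\|_{H^{1/2}(\bd\Omega)} \leq Ck\|u\|_{H^1}$, and inserting the bounds of the previous step, the stated inequality with constant $C(k + 1 + k^{-1} + k^{-2})$ follows after collecting powers of $k$.

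The hard part will be the Rellich/Morawetz step. Integration by parts against $\beta \cdot \nabla u$ produces boundary integrals involving $|\bd_n u|^2$, $|\bd_\tau u|^2$, and cross terms between them; these must be rewritten via the impedance condition and controlled by the crude trace estimate from testing with $u$, with signs and constants tracked carefully so that the tangential gradient on $\bd\Omega$ can be absorbed into the left-hand side. This bookkeeping, carried out in detail in \cite{cummings2006}, is precisely what produces the sharp $k^{-1}$ and $k^{-2}$ factors appearing in the final constant.
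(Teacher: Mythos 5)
The paper does not actually prove this proposition — it simply cites \cite{cummings2006} — and your sketch reconstructs precisely the argument of that reference: a wavenumber-explicit $L^2$/$H^1$ a priori bound obtained by testing with $u$ and with a Rellich--Morawetz multiplier $\beta\cdot\nabla u$, followed by an elliptic-regularity bootstrap for the Neumann problem $-\Delta u = f + k^2 u$, $\bd_n u = \imath k u$. Your bookkeeping of the powers of $k$ is consistent with the stated constant, so this is essentially the same approach, with the caveat (shared by the paper's vague ``sufficiently smooth'' hypothesis) that the multiplier step requires star-shapedness or a suitable vector field and the $H^2$ step requires more boundary regularity than $C^1$.
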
 
\begin{remark}
\label{rem: H^2 bounded}
    The previous proposition essentially means that
    the operator $$(\mathcal S-k^2 \mathcal M_1-\imath k \mathcal B)^{-1} \mathcal J: L^2(\Omega)\to H^2(\Omega),$$ is bounded, with $\mathcal J:L^2(\Omega) \to  \overline{H^{1}(\Omega)}'$ being the usual embedding operator that acts as $\langle \mathcal J f,\phi \rangle = (f,\phi)_{L^2}, \ \phi \in H^1(\Omega).$
\end{remark}
\section{Main results \label{sec: main results}}
This section contains our main contributions.
In particular, in Section \ref{sub: vLS} we derive the variational Lippmann-Schwinger equation and we study the associated operators. In Section \ref{sub: q to u(q)} we show weak-to-strong sequential continuity of the parameter-to-wavefield map using the properties of the variational Lippmann-Schwinger operator. Finally in Section \ref{sub: case studies} we show existence of minimizers for two variants of the full waveform inversion method (reduced order model based and conventional)  for solving the inverse boundary value problem. 
\subsection{Variational Lippmann-Schwinger equation and its properties\label{sub: vLS}}
Here, we formulate the variational Lippmann-Schwinger equation (LS) equation and we show analytical properties of the variational LS operator. 
We saw in remark \ref{rem: equivalent writing} that the forward scattering problem can be written in the distributional sense as
\begin{align}
    (\mathcal S-k^2\mathcal M_m-\imath k \mathcal B)u= f, \text{ in } \cl{H^1(\Omega)}'
    \label{eq: distributional Helm}.
\end{align}
We apply the inverse operator $(\mathcal S-k^2\mathcal M_1-ik\mathcal B)^{-1}$  to relation \eqref{eq: distributional Helm} and we obtain equivalently
\begin{align}
    (\mathcal S-k^2\mathcal M_1-\imath k \mathcal B)^{-1}( \mathcal S-k^2\mathcal M_q-k^2 \mathcal M_1-\imath k \mathcal B)u=(\mathcal S-k^2\mathcal M_1-\imath k \mathcal B)^{-1}f, \text{ in } H^1(\Omega).
\end{align}
We define  
$\widetilde u_i:=(\mathcal S-k^2\mathcal M_1-ik \mathcal B)^{-1}f$ that can be roughly interpreted as a bounded domain analogue of an incident field. We thus obtain the relation
\begin{align}
    u-k^2 (\mathcal S-k^2\mathcal M_1-\imath k\mathcal B)^{-1}\mathcal M_qu=\widetilde u_i, \text{ in } {H^1(\Omega)}.
    \label{eq: initial Var LS}
\end{align}
We define the operator 
\begin{align}
     \widetilde {\mathcal V_q}:= (\mathcal S-k^2\mathcal M_1-\imath k\mathcal B)^{-1} \mathcal M_q: H^1(\Omega ) \to H^1(\Omega ),
\end{align}
and thus, given a generic right hand side $u_i \in H^1(\Omega )$, we obtain the variational Lippmann-Schwinger equation
\begin{align}\label{eq: VLS}
     u-k^2  \widetilde {\mathcal  V_q}u=u_i, \text{ in }  {H^1(\Omega)}.
\end{align}
\begin{remark}
In view of the classical Lippmann-Schwinger formulation of the forward scattering problem in $\R^d, \ d=2,3$, we can spot high-level similarities between our LS formulation and the classical one. 
First of all, as we saw in the introduction, in the classical case we have a volume potential operator that acts as
\begin{align}
    \mathcal V_q^{\text{classical}}(\psi):=\int_{\R^2} \Phi_k(x-y)q(y)\psi(y)dy=\{\Phi_k * (q\psi)\}(x), \ x\in \R^d, \psi \in L^2(\Omega),
\end{align}
with $\Phi_k$ being the fundamental solution of the unperturbed Helmholtz equation. 
The operator $\mathcal V_q^{\text{classical}}=\{\Phi_k * (q \ \cdot)\}$ can be thought as the inverse operator of the unperturbed Helmholtz operator, which can be interpreted roughly as
\begin{align*}
    \Phi_k*(q\psi)=(-\Delta-k^2)^{-1} (q \psi ).
\end{align*}
In our case, we have a distributional operator 
$\mathcal S-k^2\mathcal M_1 -ik \mathcal B$ which is essentially the bounded domain analogue of $(-\Delta-k^2)$ with Sommerfeld radiation condition.
\end{remark}
The well‑posedness of the operator $\widetilde{\mathcal V_q}$ is summarized in the following theorem. Its proof relies on several auxiliary lemmas that we show  immediately afterwards.
\begin{theorem}
   Suppose assumption \ref{assumption} holds. Then, the operator 
   \begin{align}
       \widetilde  {\mathcal V_q}=(\mathcal S-k^2\mathcal M_1-
    \imath k\mathcal B)^{-1}\mathcal M_q: H^1(\Omega) \to H^1(\Omega)
   \end{align}
is smoothing in the sense 
\begin{align}  \widetilde  {\mathcal V_q}: H^1(\Omega)\to H^2(\Omega) \subspace H^1(\Omega),
    \end{align}
    and therefore the operator 
    \begin{align}
        \mathcal V_q:= id_{H^2\to H^1} \circ \widetilde  {\mathcal V_q}: H^1(\Omega) \to H^1(\Omega)
    \end{align}
    is compact.
    Also, the variational LS operator \begin{align}
    I-k^2\mathcal V_q:H^1(\Omega) \to H^1(\Omega) \end{align}
    is invertible with a bounded inverse.  
    \label{th: well posed V_q}
\end{theorem}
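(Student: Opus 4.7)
The plan is to establish the three claims in sequence, each one reducing to the previous via a standard Fredholm-type argument. The heart of the matter is showing that $\widetilde{\mathcal{V}_q}$ genuinely lands in $H^2(\Omega)$ even though $q$ is only in $L^p(\Omega)$ and the input $u$ is only in $H^1(\Omega)$; once this is in hand, the rest is routine.

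For the smoothing property I would exploit the two-dimensional Sobolev embedding $H^1(\Omega) \hookrightarrow L^s(\Omega)$ valid for every finite $s \geq 1$. Choosing the Hölder conjugate $s = 2p/(p-2)$ of $p/2$ (which is finite because $p > 2$), we obtain $qu \in L^2(\Omega)$ for any $u \in H^1(\Omega)$, with the bound $\|qu\|_{L^2} \leq \|q\|_{L^p}\|u\|_{L^s}$. Consequently the anti-dual distribution $\mathcal{M}_q u \in \overline{H^1(\Omega)}'$ coincides with $\mathcal{J}(qu)$, where $\mathcal{J}:L^2(\Omega)\to \overline{H^1(\Omega)}'$ is the embedding used in Remark \ref{rem: H^2 bounded}. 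That remark states precisely that $(\mathcal{S} - k^2 \mathcal{M}_1 - \imath k \mathcal{B})^{-1} \mathcal{J}$ is bounded from $L^2(\Omega)$ into $H^2(\Omega)$. Composing these two bounded maps yields a bounded operator $\widetilde{\mathcal{V}_q}:H^1(\Omega)\to H^2(\Omega)$.

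Compactness of $\mathcal{V}_q:H^1(\Omega)\to H^1(\Omega)$ is then immediate from Assumption \ref{assumption}, which supplies the compact embedding $H^2(\Omega)\hookrightarrow H^1(\Omega)$. In particular $I - k^2\mathcal{V}_q$ is a compact perturbation of the identity on $H^1(\Omega)$, hence Fredholm of index zero; by the Fredholm alternative, to conclude invertibility with a bounded inverse it suffices to establish injectivity.

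For injectivity, suppose $(I-k^2\mathcal{V}_q)u=0$ in $H^1(\Omega)$. Applying the isomorphism $\mathcal{S}-k^2\mathcal{M}_1-\imath k\mathcal{B}:H^1(\Omega)\to \overline{H^1(\Omega)}'$ of Remark \ref{rem: equivalent writing} to both sides and rearranging yields $(\mathcal{S}-k^2\mathcal{M}_{1+q}-\imath k\mathcal{B})u=0$, which is precisely the weak formulation \eqref{eq: weak form} with $m=1+q$ and right-hand side zero. Proposition \ref{pr: forward problem well posedness} then forces $u=0$, and boundedness of the inverse follows from the open mapping theorem. I anticipate the only nontrivial step is the smoothing one: one must confirm that $\mathcal{M}_q u$ is genuinely realized by an $L^2$-density rather than only by an $H^1$-antidual distribution, and it is here that the assumption $p>2$ enters decisively through the two-dimensional Sobolev embedding. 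Everything else is bookkeeping within the Fredholm framework.
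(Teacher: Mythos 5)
Your proposal is correct and follows essentially the same route as the paper: the generalized H\"older inequality with $s=2p/(p-2)$ to get $qu\in L^2(\Omega)$, the $H^2$-regularity of Proposition \ref{prop: H^2 estimate} (Remark \ref{rem: H^2 bounded}) for the smoothing, the compact embedding $H^2(\Omega)\hookrightarrow H^1(\Omega)$ for compactness, and injectivity reduced to uniqueness for the variational Helmholtz operator followed by Riesz--Fredholm. The only cosmetic difference is that the paper treats $p=\infty$ as a separate (trivial) case with $s=2$, which your limiting exponent handles implicitly.
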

We obtain directly the following result.
\begin{corollary}
\label{coro: equivalent vLS-Helm}
    Assume $k>0, f\in \overline{H^1(\Omega)'}$ and $q$ as in assumption \ref{assumption}. Then, if $u \in H^1(\Omega)$ solves uniquely
    \begin{align}
        u-k^2 \mathcal V_qu=(\mathcal S-k^2\mathcal M_1-
    \imath k\mathcal B)^{-1} f, \ {H^1(\Omega)},
    \end{align}
    then equivalently, $u$ solves equation \eqref{distibutional Helm} uniquely.
\end{corollary}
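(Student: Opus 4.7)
The plan is to verify the equivalence by showing that each formulation can be obtained from the other via application of an invertible operator, so that the uniqueness statement transfers automatically. Recall from Remark \ref{rem: equivalent writing} that the operator $\mathcal{S} - k^2\mathcal{M}_1 - \imath k \mathcal{B}: H^1(\Omega) \to \overline{H^1(\Omega)}'$ is a bijection with bounded inverse (being the special case $m=1$ of a problem covered by Proposition \ref{pr: forward problem well posedness}). Likewise, by Theorem \ref{th: well posed V_q}, the operator $I - k^2 \mathcal{V}_q: H^1(\Omega) \to H^1(\Omega)$ is invertible with bounded inverse. Both equations in the statement are therefore of the form ``invertible operator applied to $u$ equals a fixed right-hand side,'' and the claim is that these are the same equation up to composition with an isomorphism.

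For the forward direction, I would start with the vLS equation
\begin{align*}
u - k^2 \mathcal{V}_q u = (\mathcal{S} - k^2\mathcal{M}_1 - \imath k \mathcal{B})^{-1} f \qquad \text{in } H^1(\Omega),
\end{align*}
and apply $\mathcal{S} - k^2\mathcal{M}_1 - \imath k \mathcal{B}$ to both sides, which is legitimate because this operator maps $H^1(\Omega)$ boundedly into $\overline{H^1(\Omega)}'$. Using the definition $\mathcal{V}_q = (\mathcal{S} - k^2 \mathcal{M}_1 - \imath k \mathcal{B})^{-1} \mathcal{M}_q$ (where I tacitly compose with the embedding $id_{H^2 \to H^1}$, which disappears under the inverse operator), the left-hand side becomes $(\mathcal{S} - k^2\mathcal{M}_1 - \imath k \mathcal{B}) u - k^2 \mathcal{M}_q u$, while the right-hand side collapses to $f$. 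Since $\mathcal{M}_m = \mathcal{M}_1 + \mathcal{M}_q$ (a direct consequence of $m = 1 + q$ and the linearity of $\mathcal{M}_{(\cdot)}$ in its subscript, which is immediate from the defining formula $\langle \mathcal{M}_m y, \phi\rangle = (my,\phi)_{L^2(\Omega)}$), this rearranges to
\begin{align*}
(\mathcal{S} - k^2\mathcal{M}_m - \imath k \mathcal{B}) u = f \qquad \text{in } \overline{H^1(\Omega)}',
\end{align*}
which is exactly \eqref{distibutional Helm}. The reverse direction is identical: apply $(\mathcal{S} - k^2\mathcal{M}_1 - \imath k \mathcal{B})^{-1}$ to \eqref{distibutional Helm}, split $\mathcal{M}_m = \mathcal{M}_1 + \mathcal{M}_q$, and rearrange to recover the vLS equation.

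Finally, since both operators involved in the two formulations are invertible on their respective spaces, unique solvability of one equation is equivalent to unique solvability of the other: a solution of one equation is, by the computation above, a solution of the other, and any two solutions of either equation would correspond to two solutions of the other. There is no genuine obstacle here; the only thing to be careful about is making sure the embedding $id_{H^2 \to H^1}$ implicit in the definition of $\mathcal{V}_q$ is treated correctly, but because applying $\mathcal{S} - k^2\mathcal{M}_1 - \imath k \mathcal{B}$ recovers $\mathcal{M}_q$ as an operator into $\overline{H^1(\Omega)}'$ regardless of whether its range is viewed in $H^2$ or $H^1$, this causes no issue.
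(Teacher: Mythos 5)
Your proposal is correct and follows essentially the same route as the paper: the corollary is obtained there "directly" from the derivation in Section \ref{sub: vLS}, where $(\mathcal S-k^2\mathcal M_1-\imath k\mathcal B)^{-1}$ is applied to \eqref{eq: distributional Helm}, the splitting $\mathcal M_m=\mathcal M_1+\mathcal M_q$ is used, and the invertibility facts from Remark \ref{rem: equivalent writing} and Theorem \ref{th: well posed V_q} transfer uniqueness between the two formulations. Your extra care with the embedding $id_{H^2\to H^1}$ is a correct observation but not a substantive departure.
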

Before we dive deeper into the more technical aspects, let us break down more the action of $\mathcal V_q$. Given an element $g\in H^1(\Omega)$, then ${\mathcal V_q} g=\widetilde{\mathcal V_q} g \in H^1(\Omega)$ and the equation 
\begin{align}
    \widetilde{\mathcal V_q} g=(\mathcal S-k^2\mathcal M_1-\imath k\mathcal B)^{-1}\mathcal M_qg=w,
\end{align}
is equivalent to solving the following Helmholtz problem,
\begin{align}
    (-\Delta-k^2)w=qg, \text{ in } \Omega, \label{rel: aux Helm}
\end{align}
\begin{align}
    (\bd_n-\imath k)w=0,  \text{ in } \bd \Omega.  \label{rel: aux BC}
\end{align}
\begin{remark}
     Notice that we associate $\mathcal M_qg\in \overline{H^{1}(\Omega)}'$ with the product $qu$ through
     \begin{align}
         \langle \mathcal  M_q g,v\rangle = \langle \mathcal J(\widetilde {\mathcal M_q}g),v\rangle =\int_\Omega (qg)\overline v dx= (\widetilde {\mathcal M_q}g,v)_{L^2},
     \end{align}
    with $\mathcal J:L^2(\Omega) \to  \overline{H^{1}(\Omega)}'$ being the usual embedding operator, and
    \begin{align*}
        \widetilde {\mathcal M_q}: H^1(\Omega)\to L^2(\Omega)
    \end{align*}
    being the multiplication operator defined by $q$. 
    Essentially, if $\langle \mathcal  M_q g, \cdot \rangle $  is used as a right hand side of equation \eqref{eq: distributional Helm} then it introduces the volume source term $qg.$
\end{remark}
\begin{lemma}
\label{lemma: M_q bounded}
    Given $q$ according to assumption \ref{assumption}, then $\widetilde {\mathcal M_q}: H^1(\Omega) \to L^2(\Omega)$ is bounded, and thus
    \begin{align}
        \widetilde{\mathcal V_q}:H^1(\Omega)\to H^1(\Omega)
    \end{align}
    is bounded.
\end{lemma}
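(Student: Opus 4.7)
The plan is to reduce the claim about $\widetilde{\mathcal V}_q$ to the boundedness of $\widetilde{\mathcal M}_q$, and then to establish the latter by a Hölder/Sobolev argument exploiting the fact that $\Omega \subset \R^2$ and $p>2$.

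First I would handle the implication $\widetilde{\mathcal M_q}$ bounded $\Rightarrow$ $\widetilde{\mathcal V_q}$ bounded. By definition $\widetilde{\mathcal V_q} = (\mathcal S-k^2\mathcal M_1 - \imath k \mathcal B)^{-1}\mathcal M_q$, where the inverse operator is a bounded map from $\overline{H^1(\Omega)}'$ into $H^1(\Omega)$ by the well-posedness established in Proposition \ref{pr: forward problem well posedness} (see also Remark \ref{rem: equivalent writing}). Now $\mathcal M_q = \mathcal J \circ \widetilde{\mathcal M_q}$, with $\mathcal J:L^2(\Omega)\to\overline{H^1(\Omega)}'$ the continuous embedding induced by the $L^2$-pairing. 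So once $\widetilde{\mathcal M_q}:H^1(\Omega)\to L^2(\Omega)$ is bounded, the composition $\widetilde{\mathcal V_q}:H^1(\Omega)\to H^1(\Omega)$ is bounded as a composition of bounded operators.

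The main step is therefore the boundedness of $\widetilde{\mathcal M_q}$. Fix $g\in H^1(\Omega)$ and estimate $\|qg\|_{L^2(\Omega)}$ by Hölder's inequality applied to the product $|q|^2 \cdot |g|^2$ with conjugate exponents $p/2$ and $p/(p-2)$, which is legitimate since $p>2$. This yields
\begin{align*}
\|qg\|_{L^2(\Omega)} \leq \|q\|_{L^p(\Omega)} \|g\|_{L^{2p/(p-2)}(\Omega)}.
\end{align*}
The exponent $r := 2p/(p-2)$ is finite (and equals $2$ in the limiting case $p=\infty$, which is trivial). Because $\Omega \subset \R^2$ is bounded and $C^1$, the Sobolev embedding $H^1(\Omega)\hookrightarrow L^r(\Omega)$ holds for every finite $r\in[1,\infty)$, giving $\|g\|_{L^r(\Omega)} \leq C_S\|g\|_{H^1(\Omega)}$. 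Combining, $\|\widetilde{\mathcal M_q}g\|_{L^2(\Omega)} \leq C_S \|q\|_{L^p(\Omega)}\|g\|_{H^1(\Omega)}$, which is the desired bound.

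I do not expect a real obstacle here; the only delicate point is making sure the pairing of exponents in Hölder is allowable, which is exactly what the strict inequality $p>2$ buys (and the two-dimensionality of $\Omega$ is what allows the Sobolev embedding to reach arbitrarily large exponents, accommodating values of $p$ close to $2$). The case $p=\infty$ reduces to a one-line estimate $\|qg\|_{L^2}\leq \|q\|_{L^\infty}\|g\|_{L^2}$ and can be treated in parallel or as a remark.
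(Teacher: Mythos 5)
Your proposal is correct and follows essentially the same route as the paper: a (generalized) H\"older estimate $\|qg\|_{L^2}\leq \|q\|_{L^p}\|g\|_{L^{s}}$ with $s=2p/(p-2)$, the Sobolev embedding $H^1(\Omega)\hookrightarrow L^{s}(\Omega)$, a separate one-line treatment of $p=\infty$, and boundedness of $\widetilde{\mathcal V_q}$ as a composition with the bounded inverse of the unperturbed variational Helmholtz operator. Your explicit remark that the two-dimensionality of $\Omega$ is what lets the embedding reach arbitrarily large exponents (needed as $p\downarrow 2$) is a nice clarification the paper leaves implicit, but it is not a different argument.
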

\begin{proof}
    First, we treat the case when $p\in(2,\infty)$. Since $u\in H^1(\Omega) \subset L^s(\Omega), \ s\geq  2$, we can find an $s$ with 
    \begin{align}
        \| qu \|_{L^{sp/(p+s)}}\leq \|q\|_{L^p} \|u\|_{L^s},
        \label{rel: Generalise Holder}
    \end{align}
    using the generalized Hölder inequality.
    We want $sp/(s+p)=2$, thus we select the value $s=2p/(p-2)$.
    Therefore we obtain 
    \begin{align}
        \|\widetilde {\mathcal M_q}u \|_{L^2}=\| qu\|_{L^2}\leq \|q\|_{L^p}\|u\|_{L^s}\leq  C_{1}\|q\|_{L^p}\|u\|_{H^1},
    \end{align}
    with $C_1$ being the bound of the embeding operator $id:{H^1(\Omega)\to L^s(\Omega)}$.
   We see that 
   \begin{align}
       |\langle \mathcal{M}_q u,v\rangle| =|(\widetilde {\mathcal M_q}u,v)_{L^2}|\leq C_{1}\|q\|_{L^p}\|u\|_{H^1} \|v\|_{H^1},
       \end{align}
       therefore we obtain
       \begin{align}
       \|\mathcal{M}_q u\|_{\overline{H^1}'} \leq C_{1}\|q\|_{L^p} \|u\|_{H^1},
   \end{align}
    which means that the operator $\mathcal M_q$ is bounded.
    
    It is also clear that $\widetilde {\mathcal V_q}: H^1(\Omega)  \to  H^1(\Omega) $ is bounded as a composition of $\mathcal M_q$ with $(\mathcal S-k^2 \mathcal M_1 -\imath k \mathcal B)^{-1}$, which is also bounded as we discuss in remark \ref{rem: equivalent writing}. 
    
    Finally, when $p=\infty$, we obtain $|q(x)u(x)|^2 \leq \|q\|_{L^\infty}^2 |u(x)|^2,  $ for almost all $x \in \Omega.$ Thus we obtain that $\|qu\|_{L^2}\leq  \|q\|_{L^\infty}\|u\|_{L^2}$ and we can continue as in case $p\in(2,\infty).$
\end{proof}
According to proposition \ref{prop: H^2 estimate}, since ${\mathcal M_q}g$ corresponds to a volume source $\widetilde {\mathcal M_q}g=qg\in L^2(\Omega)$ means  that 
$\widetilde{\mathcal V_q} g=w \in H^2(\Omega) $. Therefore, we can take
\begin{align}
    \mathcal V_q=id_{H^2\to H^1} \circ \widetilde{\mathcal V_q}
    : H^1(\Omega )\to H^2 (\Omega )\stackrel{c}{\subspace} H^1 (\Omega).
    \label{rel: Vq in H^2}
\end{align}
This leads to the compactness of $\mathcal V_q$.
\begin{lemma}
\label{lemma: compact V_q}
    Given $q$ and $\Omega$ according to assumption \ref{assumption}, then 
    \begin{align}
        \mathcal  V_q:H^1(\Omega)\to H^1(\Omega)
    \end{align}
    is a compact operator.
\end{lemma}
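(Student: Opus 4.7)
The plan is to factor $\mathcal{V}_q$ through $H^2(\Omega)$ and invoke the compact embedding in Assumption \ref{assumption}. Concretely, I would write
\begin{align*}
\mathcal{V}_q = id_{H^2\to H^1} \circ (\mathcal{S}-k^2\mathcal{M}_1-\imath k\mathcal{B})^{-1}\mathcal{J} \circ \widetilde{\mathcal{M}_q},
\end{align*}
where $\widetilde{\mathcal{M}_q}: H^1(\Omega)\to L^2(\Omega)$ is the multiplication operator and $\mathcal{J}$ is the canonical inclusion $L^2(\Omega)\hookrightarrow \overline{H^1(\Omega)}'$. The goal is to identify $\widetilde{\mathcal{V}_q}$ with a bounded operator $H^1(\Omega)\to H^2(\Omega)$ and then use the compact embedding to conclude.

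First I would check that the three operators appearing to the right of $id_{H^2\to H^1}$ compose into a bounded map $H^1(\Omega)\to H^2(\Omega)$. Lemma \ref{lemma: M_q bounded} gives that $\widetilde{\mathcal{M}_q}: H^1(\Omega)\to L^2(\Omega)$ is bounded (this is the step where the exponent $p>2$ is used via the generalized H\"older inequality). Composing with $\mathcal{J}$ produces an element of $\overline{H^1(\Omega)}'$, and Remark \ref{rem: H^2 bounded} (which follows from Proposition \ref{prop: H^2 estimate}) tells us that $(\mathcal{S}-k^2\mathcal{M}_1-\imath k\mathcal{B})^{-1}\mathcal{J}: L^2(\Omega)\to H^2(\Omega)$ is bounded. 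Thus $\widetilde{\mathcal{V}_q}$, regarded as a map with codomain $H^2(\Omega)$, is the composition of two bounded linear operators and is therefore bounded.

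Next I would invoke Assumption \ref{assumption}(1), which states precisely that $id: H^2(\Omega)\hookrightarrow H^1(\Omega)$ is compact. Since the composition of a bounded operator with a compact operator is compact (this is a standard fact: bounded maps send bounded sets to bounded sets, and compact maps send bounded sets to relatively compact sets), the operator $\mathcal{V}_q = id_{H^2\to H^1}\circ \widetilde{\mathcal{V}_q}: H^1(\Omega)\to H^1(\Omega)$ is compact.

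There is really no serious obstacle here: all of the required pieces (boundedness of multiplication by $q$, $H^2$-regularity for the unperturbed impedance problem, and compactness of the Sobolev embedding) have been assembled in the preceding results. The only point that deserves a brief explicit verification is that the output of $\widetilde{\mathcal{V}_q}$ truly lies in $H^2(\Omega)$ in the pointwise a.e.\ sense (not merely that its $H^1$-representative happens to coincide with some $H^2$-function), but this is automatic from the definition of $(\mathcal{S}-k^2\mathcal{M}_1-\imath k\mathcal{B})^{-1}\mathcal{J}$ used in Remark \ref{rem: H^2 bounded}.
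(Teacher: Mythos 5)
Your proposal is correct and follows essentially the same route as the paper: the text preceding the lemma (relation \eqref{rel: Vq in H^2}) establishes exactly the factorization $\mathcal V_q = id_{H^2\to H^1}\circ \widetilde{\mathcal V_q}$ with $\widetilde{\mathcal V_q}$ bounded into $H^2(\Omega)$ via Proposition \ref{prop: H^2 estimate} and Lemma \ref{lemma: M_q bounded}, and then concludes compactness from the compact embedding of Assumption \ref{assumption}. Your write-up is if anything slightly more explicit than the paper's, which leaves the composition-of-bounded-and-compact step implicit.
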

\begin{remark}
    In view of the two above lemmas, the reason why we assume $p>2$ in assumption \ref{assumption} becomes clear. To obtain $qg\in L^2(\Omega),$ and therefore obtain a right hand side for \eqref{rel: aux Helm} that yields an $H^2-$regular solution, we need to assume that $p\in(2,\infty].$
\end{remark}
We would like to use the Riesz-Fredholm theory for showing invertibility of the variational LS equation. Since we have shown that $\mathcal V_q$ is compact, we now need to show that $I-k^2\mathcal V_q$ is injective.
\begin{lemma}
\label{lemma: injective V_q}
    Given $q$ and $\Omega$ according to assumption \ref{assumption}, the operetor
    \begin{align}
        I-k^2\mathcal V_q: H^1(\Omega)\to H^1(\Omega)
    \end{align}
    is injective.
\end{lemma}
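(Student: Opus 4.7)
The plan is to reduce injectivity of $I - k^2 \mathcal{V}_q$ to the uniqueness statement already established in Proposition \ref{pr: forward problem well posedness} for the Helmholtz impedance problem with variable $m = 1+q$. The idea is simply to unwind the definition of $\mathcal{V}_q$: an element of the kernel of $I - k^2 \mathcal{V}_q$ should turn out to be a weak solution of the homogeneous boundary value problem, which is forced to vanish.

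More precisely, I would start by supposing $u \in H^1(\Omega)$ satisfies
\begin{align*}
u - k^2 \mathcal{V}_q u = 0 \quad \text{in } H^1(\Omega).
\end{align*}
Recalling from \eqref{rel: Vq in H^2} that $\mathcal{V}_q = id_{H^2\to H^1} \circ (\mathcal{S} - k^2 \mathcal{M}_1 - \imath k \mathcal{B})^{-1} \mathcal{M}_q$, and using the fact (Remark \ref{rem: equivalent writing}) that $\mathcal{S} - k^2 \mathcal{M}_1 - \imath k \mathcal{B}: H^1(\Omega) \to \overline{H^1(\Omega)}'$ is a bounded isomorphism, I would apply this operator to both sides of the identity. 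This yields the $\overline{H^1(\Omega)}'$ relation
\begin{align*}
(\mathcal{S} - k^2 \mathcal{M}_1 - \imath k \mathcal{B}) u - k^2 \mathcal{M}_q u = 0,
\end{align*}
and since $\mathcal{M}_1 + \mathcal{M}_q = \mathcal{M}_m$ with $m = 1 + q$, this simplifies to
\begin{align*}
(\mathcal{S} - k^2 \mathcal{M}_m - \imath k \mathcal{B}) u = 0 \quad \text{in } \overline{H^1(\Omega)}'.
\end{align*}
This is exactly the distributional form \eqref{distibutional Helm} of the variational Helmholtz impedance problem \eqref{eq: weak form} with right-hand side $f = 0$.

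Finally, I would invoke Proposition \ref{pr: forward problem well posedness}, which guarantees that the variational boundary value problem admits a \emph{unique} solution in $H^1(\Omega)$ for any $f \in \overline{H^1(\Omega)}'$. Since $u = 0$ is trivially a solution when $f = 0$, uniqueness forces our $u$ to coincide with $0$, proving injectivity of $I - k^2 \mathcal{V}_q$.

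I do not anticipate a serious technical obstacle here: the proof is essentially a bookkeeping exercise that transports the kernel condition through the isomorphism $\mathcal{S} - k^2 \mathcal{M}_1 - \imath k \mathcal{B}$. The only point requiring slight care is to ensure we do not invoke Corollary \ref{coro: equivalent vLS-Helm} (which depends on the invertibility of $I - k^2 \mathcal{V}_q$ that we are currently establishing), so the reduction must be done directly at the level of the operators $\mathcal{S}, \mathcal{M}_m, \mathcal{B}$, using only Proposition \ref{pr: forward problem well posedness} and the invertibility of $\mathcal{S} - k^2 \mathcal{M}_1 - \imath k \mathcal{B}$ (i.e., well-posedness in the unperturbed case $q = 0$).
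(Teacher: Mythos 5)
Your proposal is correct and follows essentially the same route as the paper: both transport the kernel condition through the isomorphism $\mathcal{S}-k^2\mathcal{M}_1-\imath k\mathcal{B}$ to recover the homogeneous distributional Helmholtz equation $(\mathcal{S}-k^2\mathcal{M}_{1+q}-\imath k\mathcal{B})u=0$ and then invoke the injectivity of that operator (Remark \ref{rem: equivalent writing} / Proposition \ref{pr: forward problem well posedness}) to conclude $u=0$. Your added caution about avoiding circularity with Corollary \ref{coro: equivalent vLS-Helm} is a sensible refinement but does not change the substance of the argument.
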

\begin{proof}
    Consider the variational LS equation with zero right hand side,
    \begin{align}
        (I-k^2\mathcal V_q)u=0 \text{ in } H^1(\Omega).
        \nonumber
    \end{align}
    Since relations \eqref{eq: initial Var LS} and \eqref{eq: distributional Helm} are equivalent, we obtain that $u$ should also satisfy
     \begin{align}
    (\mathcal S-k^2\mathcal M_{1+q} -\imath k\mathcal B)u=0, \text{ in } \overline{H^1(\Omega)}'.
    \end{align}
    As we mentioned in remark \ref{rem: equivalent writing}, the variational Helmholtz operator is injective, thus we obtain that $u=0,$ therefore
    \begin{align}
        (I-k^2\mathcal V_q) : H^1(\Omega) \to H^1(\Omega)
    \end{align}
    is also injective.
\end{proof}
We now collect all the above results to prove the main result of this subsection, Theorem \ref{th: well posed V_q}.
\begin{proof}[Proof of Theorem \ref{th: well posed V_q}.]
    We have shown in Lemma \ref{lemma: compact V_q} that $\mathcal V_q: H^1(\Omega)\to H^1(\Omega)$ is compact and in Lemma \ref{lemma: injective V_q} that $I+k^2\mathcal V_q: H^1(\Omega)\to H^1(\Omega)$ is injective. Thus, we can use  the Riesz Fredholm theory to conclude that 
    $I+k^2\mathcal V_q: H^1(\Omega)\to H^1(\Omega)$ is invertible with bounded inverse. Therefore, the variational LS equation \eqref{eq: VLS} has a unique solution. 
\end{proof}
\subsection{Properties of the Variational Lippmann-Schwinger operator and parameter-to-state map\label{sub: q to u(q)}}
In this subsection we show analytical properties of the parameter-to-state map $$q\mapsto u(q).$$ 
Our proofs of the analytical properties of the variational LS operator follow a similar approach as in \cite{lechleiter2013tikhonov}. In particular, we make use of the theory of collectively compact operators in the context of our proposed Lippmann–Schwinger type equation.

In the following theorem we formulate the main result of this subsection. Between the statement of the theorem and its proof we show a number of auxiliary results in the form of lemmas needed for the proof of the theorem.
\begin{theorem}
\label{th: q mapsto u(q)}
Suppose the assumption \ref{assumption} and take a sequence $\{q_n\}_{n \in \N} \subset L^p(\Omega; [-1,\infty)),$ and $q\in L^p(\Omega; [-1,\infty))$  for some$ \ p\in (2,\infty]$ with 
    \begin{align}
        q_n \weakto q \text{ in }\sigma(L^p(\Omega),(L^{p}(\Omega))'), \text { as } n\to \infty, \text{ if } p\in(2,\infty),
    \end{align}
    or 
    \begin{align}
        q_n \stackrel{*}\weakto q \text{ in }\sigma(L^\infty(\Omega),L^{1}(\Omega)), \text { as } n\to \infty.
    \end{align}
    The parameter-to-state map is weak to strong continuous in the sense
    \begin{align}    q_n\stackrel{\sigma(L^p,(L^{p})') }\weakto q \Ra u(q_n)\stackrel{H^1}{\to}u(q),  \text{ if } p\in(2,\infty)
     \end{align}
     or 
     \begin{align}    q_n\stackrel{\stackrel{\sigma(L^\infty,L^{1})}{*} }\weakto q \Ra u(q_n)\stackrel{H^1}{\to}u(q), \ \text{ if } p=\infty,
     \end{align}
     as $n\to \infty,$ 
     where $u(q_n)$ solves the Helmholtz problem \eqref{eq: distributional Helm} with $m=1+q_n$, $n\in \N$ and similarly $u(q)$ solves  the same problem with $m=1+q$.
\end{theorem}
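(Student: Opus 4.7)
The plan is to recast the theorem as a question about convergence of resolvents. By corollary \ref{coro: equivalent vLS-Helm}, the wavefield admits the representation $u(q) = (I - k^2 \mathcal V_q)^{-1} \widetilde u_i$ with $\widetilde u_i = (\mathcal S - k^2 \mathcal M_1 - \imath k \mathcal B)^{-1} f$ independent of $q$. The target convergence $u(q_n) \to u(q)$ in $H^1(\Omega)$ therefore reduces to pointwise convergence of the resolvents $(I - k^2 \mathcal V_{q_n})^{-1}$ applied to the fixed vector $\widetilde u_i$. Following the strategy of \cite{lechleiter2013tikhonov}, I would invoke Anselone's theorem on collectively compact operator approximation: if $\{T_n\}$ is a collectively compact family on a Banach space with $T_n \to T$ pointwise, and if $I-T$ is invertible, then for $n$ large $I-T_n$ is invertible with uniformly bounded inverses and $(I-T_n)^{-1}\to (I-T)^{-1}$ pointwise. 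With $T_n=k^2\mathcal V_{q_n}$ and $T=k^2\mathcal V_q$, the invertibility of $I-T$ is provided by theorem \ref{th: well posed V_q}, and it remains to verify collective compactness of $\{T_n\}$ and pointwise convergence $T_n v \to Tv$ for each $v \in H^1(\Omega)$.

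For collective compactness, first note that $\{q_n\}$ is bounded in $L^p(\Omega)$ as a weakly (or weakly-*) convergent sequence. Combining this with the chain of estimates in lemma \ref{lemma: M_q bounded} and proposition \ref{prop: H^2 estimate} yields, for every $v$ in the closed unit ball $B_1$ of $H^1(\Omega)$,
\begin{align*}
\|\mathcal V_{q_n}v\|_{H^2}\leq C\|q_n v\|_{L^2}\leq C'\|q_n\|_{L^p}\|v\|_{H^1}\leq C'',
\end{align*}
uniformly in $n$. Hence $\bigcup_n \mathcal V_{q_n}(B_1)$ is bounded in $H^2(\Omega)$, and the compact embedding $H^2(\Omega)\subspace H^1(\Omega)$ from assumption \ref{assumption} renders this set relatively compact in $H^1(\Omega)$, establishing collective compactness of $\{\mathcal V_{q_n}\}$.

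For the pointwise convergence, I would fix $v\in H^1(\Omega)$ and show $\mathcal M_{q_n}v\to \mathcal M_q v$ in $\cl{H^1(\Omega)}'$; composing with the bounded operator $(\mathcal S-k^2\mathcal M_1-\imath k \mathcal B)^{-1}$ then yields $\mathcal V_{q_n}v\to \mathcal V_q v$ in $H^1(\Omega)$. The idea is to combine weak-limit identification with a compactness argument. For every test function $\phi\in H^1(\Omega)$, the product $v\overline\phi$ lies in $L^{p'}(\Omega)$: by the two-dimensional Sobolev embedding $H^1(\Omega) \subspace L^r(\Omega)$ valid for every $r<\infty$, one can pick $r$ large enough that $v\overline\phi \in L^{p'}(\Omega)$ in both the $p\in(2,\infty)$ and $p=\infty$ cases. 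Thus
\begin{align*}
\langle (\mathcal M_{q_n}-\mathcal M_q)v,\phi\rangle=\int_\Omega (q_n-q)\,v\,\overline\phi\, dx\to 0,
\end{align*}
by the assumed weak (respectively weak-*) convergence of $q_n$. Since $\{(q_n-q)v\}$ is bounded in $L^2(\Omega)$ by lemma \ref{lemma: M_q bounded}, and the embedding $L^2(\Omega)\subspace \cl{H^1(\Omega)}'$ is compact as the adjoint of the Rellich embedding $H^1(\Omega) \subspace L^2(\Omega)$, a subsequence-of-subsequence argument upgrades the above test-function convergence to norm convergence in $\cl{H^1(\Omega)}'$.

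The step expected to require the most care is this last pointwise convergence, where the interplay between weak convergence of $q_n$ and the compact embeddings must be handled uniformly across the regimes $p\in(2,\infty)$ and $p=\infty$. This is precisely where the hypothesis $p>2$ is critical: it guarantees that the Hölder exponent $s=2p/(p-2)$ used in lemma \ref{lemma: M_q bounded} is finite, yielding the $L^2$-regularity of $q_n v$ that feeds both the $H^2$-estimate behind collective compactness and the compact embedding into $\cl{H^1(\Omega)}'$ driving the limit identification. Once both hypotheses of the Anselone theorem are verified, applying the theorem to the fixed vector $\widetilde u_i$ delivers the desired strong $H^1$-convergence $u(q_n)\to u(q)$.
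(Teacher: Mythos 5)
Your proposal is correct and follows essentially the same route as the paper: recast the problem via the variational Lippmann--Schwinger equation with the fixed right-hand side $\widetilde u_i$, verify collective compactness of $\{k^2\mathcal V_{q_n}\}$ through the uniform $H^2$ bound (H\"older plus Proposition \ref{prop: H^2 estimate} and the compact embedding $H^2(\Omega)\hookrightarrow H^1(\Omega)$), establish the pointwise convergence $\mathcal V_{q_n}v\to\mathcal V_q v$ in $H^1(\Omega)$, and conclude with the collectively compact operator approximation theorem (the paper's Theorem \ref{th: collectively compact theorem} and Corollary \ref{cor: colectively compact estimate}). The only cosmetic difference is in the pointwise-convergence step, where you place the compactness in the embedding $L^2(\Omega)\hookrightarrow\overline{H^1(\Omega)}'$ and then apply the bounded solution operator, whereas the paper (Lemmas \ref{lemma: Mq_n pointwise} and \ref{lemma: Vq_nu pointwise}) shows $q_nv\rightharpoonup qv$ weakly in $L^2(\Omega)$ and applies the compact solution operator $L^2(\Omega)\to H^2(\Omega)\hookrightarrow H^1(\Omega)$ --- the same weak-to-strong upgrade invoked at a different point in the composition.
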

We postpone the proof of Theorem \ref{th: q mapsto u(q)} to the end of this subsection after showing a number of auxiliary results.
For the sake of completeness we first state the following proposition.
\begin{proposition}
\label{prop: q_n to q means q geq 0}
    Given a sequence $\{q_n\}_{n\in \N} \subset L^p(\Omega; [-1,\infty)), \ p\in(2,\infty]$ with 
    \begin{align}
        q_n \weakto q \text{ in }\sigma(L^p(\Omega),(L^{p}(\Omega))'), \text { as } n\to \infty, \text{ if } p\in(2,\infty),
    \end{align}
    or 
    \begin{align}
        q_n \stackrel{*}\weakto q \text{ in }\sigma(L^\infty(\Omega),L^{1}(\Omega)), \text { as } n\to \infty,
    \end{align}
    then $q\geq -1$ almost everywhere.
\end{proposition}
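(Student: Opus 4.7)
The plan is to show that the set $K := \{f \in L^p(\Omega) : f \geq -1 \text{ a.e.\ on } \Omega\}$ is sequentially closed under the relevant (weak or weak-$*$) topology. Since $q_n \in K$ for every $n \in \N$, this will immediately yield $q \in K$, which is exactly the claim.

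My approach is to test the convergence against characteristic functions of measurable subsets of $\Omega$. Because $\Omega$ is bounded, any measurable $A \subseteq \Omega$ has finite measure, so $\chi_A$ lies in $L^{p'}(\Omega)$, where $p' = p/(p-1)$ when $p \in (2,\infty)$ and $p' = 1$ when $p = \infty$. Hence $\chi_A$ is an admissible test element in either convergence mode, and the hypothesis gives
\begin{align*}
    \int_A q_n \, dx \longrightarrow \int_A q \, dx \q \text{as } n \to \infty.
\end{align*}
From $q_n \geq -1$ a.e.\ we have $\int_A q_n \, dx + |A| \geq 0$ for every $n$, and passing to the limit produces $\int_A (q + 1) \, dx \geq 0$ for every measurable $A \subseteq \Omega$.

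To finish, I would argue by contradiction: if the measurable set $E := \{x \in \Omega : q(x) < -1\}$ had positive Lebesgue measure, then taking $A = E$ in the previous display would yield $\int_E (q + 1)\, dx < 0$, a contradiction. Hence $|E| = 0$, i.e., $q \geq -1$ almost everywhere.

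I do not expect a genuine obstacle in this argument. The only mildly delicate point is treating the reflexive case $p \in (2,\infty)$ and the non-reflexive case $p = \infty$ on equal footing; working directly with characteristic functions as test elements, rather than invoking Mazur's theorem on the closed convex cone $K$, is the cleanest way to cover both settings with a single argument.
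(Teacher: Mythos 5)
Your proof is correct and follows essentially the same route as the paper's: both test the weak (or weak-$*$) convergence against nonnegative elements of the dual to pass the constraint $q_n+1\geq 0$ to the limit, then conclude by contradiction on the set $\{q<-1\}$. Your choice of characteristic functions $\chi_A$ in place of the paper's smooth bump functions is a minor but welcome simplification, since taking $A=\{q<-1\}$ makes the final contradiction immediate and handles $p\in(2,\infty)$ and $p=\infty$ uniformly, whereas the paper relies on an informal ``construct a function supported on the same set'' step and a citation for the $p=\infty$ case.
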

\begin{proof}
    For an almost similar result refer \cite{lechleiter2013tikhonov}. Appendix \ref{App: Extra} contains the proof of this proposition
\end{proof}
One key concept that is going to help us show analytical properties of the parameter-to-state map is the fact that the variational LS operator can yield a sequence of collectively compact operators. 
\begin{lemma}
    Given a sequence $\{q_n\}_{n\in \N}\subset L^p(\Omega;[-1,\infty))$ and $q\in L^p(\Omega)$ with  $q_n \weakto q$ in the weak topology of $L^p_{}(\Omega), \ p\in(2,\infty)$, or $q_n \stackrel{*}\weakto q$  in the weak star topology $\sigma(L^\infty(\Omega),L^1{}(\Omega))$ when $p=\infty$, then the sequence
    \begin{align}
        \{\mathcal V_{q_n}\}_{n\in \N}:H^1(\Omega) \to H^1(\Omega)
    \end{align}
    is a sequence of  collectively compact operators.
\end{lemma}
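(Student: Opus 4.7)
The plan is to verify the definition of collective compactness directly: show that the set
$$
\mathcal{K} := \{\mathcal V_{q_n} u \mid n \in \mathbb{N}, \, \|u\|_{H^1(\Omega)} \le 1\}
$$
is relatively compact in $H^1(\Omega)$. Since Lemma \ref{lemma: compact V_q} already yields that each individual $\mathcal V_{q_n}$ maps bounded sets of $H^1$ into relatively compact subsets of $H^1$ via the factorization through $H^2(\Omega)$, the only new ingredient needed is uniformity in $n$.

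First, I would invoke the Banach--Steinhaus theorem: a weakly (or weakly-$*$) convergent sequence in a Banach space is norm-bounded, so there exists $M > 0$ with $\|q_n\|_{L^p(\Omega)} \le M$ for all $n \in \mathbb{N}$. Next, for any $u \in H^1(\Omega)$ with $\|u\|_{H^1} \le 1$, I would repeat the generalized H\"older estimate from Lemma \ref{lemma: M_q bounded} together with the Sobolev embedding $H^1(\Omega) \hookrightarrow L^s(\Omega)$ (valid for all $s \in [2,\infty)$ in dimension two, choosing $s = 2p/(p-2)$, or $s = 2$ when $p = \infty$) to obtain
$$
\|\widetilde{\mathcal M}_{q_n} u\|_{L^2(\Omega)} = \|q_n u\|_{L^2(\Omega)} \le \|q_n\|_{L^p} \|u\|_{L^s} \le C_1 M \|u\|_{H^1} \le C_1 M,
$$
with $C_1$ the embedding constant, independent of $n$ and $u$.

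Now I would use the key regularity bound recorded in Remark \ref{rem: H^2 bounded}: the operator $(\mathcal S - k^2 \mathcal M_1 - \imath k \mathcal B)^{-1} \mathcal J : L^2(\Omega) \to H^2(\Omega)$ is bounded, with operator norm, say, $C_2$ depending only on $k$ and $\Omega$. Composing, I obtain
$$
\|\widetilde{\mathcal V}_{q_n} u\|_{H^2(\Omega)} \le C_2 \|q_n u\|_{L^2(\Omega)} \le C_1 C_2 M,
$$
uniformly in $n$ and in $u$ with $\|u\|_{H^1} \le 1$. Hence $\mathcal K$ is a bounded subset of $H^2(\Omega)$.

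Finally, I would invoke the compact embedding $H^2(\Omega) \hookrightarrow H^1(\Omega)$ from Assumption \ref{assumption} to conclude that $\mathcal K$, being bounded in $H^2(\Omega)$, is relatively compact in $H^1(\Omega)$, which is precisely the definition of collective compactness for the family $\{\mathcal V_{q_n}\}_{n \in \mathbb{N}}$. I do not anticipate a substantial obstacle; the only subtlety worth flagging is that the argument uses weak (or weak-$*$) convergence only to secure the uniform $L^p$-bound via Banach--Steinhaus, and does not yet exploit the convergence itself---this will become relevant in the subsequent application to the weak-to-strong continuity of $q \mapsto u(q)$ in Theorem \ref{th: q mapsto u(q)}.
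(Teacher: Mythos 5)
Your proposal is correct and follows essentially the same route as the paper: uniform boundedness of $\|q_n\|_{L^p}$ (which the paper also uses, simply quoting the norm bound of the weakly/weakly-$*$ convergent sequence rather than naming Banach--Steinhaus), the generalized H\"older estimate with $s = 2p/(p-2)$, the $H^2$-regularity bound for the constant-coefficient impedance problem, and the compact embedding $H^2(\Omega)\hookrightarrow H^1(\Omega)$. The only cosmetic difference is that you work with the unit ball while the paper takes an arbitrary bounded set $U$, which is equivalent by linearity.
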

\begin{proof}
    We need to show that given any bounded set, $U$, in $H^1(\Omega)$ then,
    \begin{align}
        S=\{\mathcal V_{q_n} u: n\in \N, u\in U\}
    \end{align}
    is relatively compact in $H^1(\Omega).$
    As we noted in relation \eqref{rel: Vq in H^2}, the image of $\mathcal V_q$ is included in $H^2(\Omega)$. 
     Next, we observe that the set $S$ is bounded in $H^2(\Omega).$ To see that  we first fix, $n\in \N$ and $u
    \in H^1(\Omega
    )$. We observe that
    \begin{align}
        \mathcal V_{q_n} u = (\mathcal S-k^2\mathcal  M_1-\imath k\mathcal B)^{-1}\mathcal M_{q_n} u=w_n,
    \end{align}
    with $w_n$ satisfying
    \begin{align}
    (-\Delta-k^2)w_n=q_nu, \text{ in } \Omega, \nonumber
\end{align}
\begin{align}
    (\bd_n-\imath k)w_n=0  \text{ in } \bd \Omega.
\end{align}
We know from proposition \ref{prop: H^2 estimate}, that
\begin{align}
    \|w_n\|_{H^2} \leq C \big(1+\frac{1}{k}+\frac{1}{k^2}\big) \| q_nu \|_{L^2}\leq C \big(1+\frac{1}{k}+\frac{1}{k^2}\big) \| q_n\|_{L^p}\|u \|_{L^s},
\end{align}
as we can choose $s=2p/(p-2)$, such that the generalized Hölder inequality holds (relation \eqref{rel: Generalise Holder}), or if $p=\infty$ we choose $s=2$. 
Thus, for all $n\in \N$ and $ u\in H^1(\Omega)$ we obtain
\begin{align}
   \|\mathcal V_{q_n} u\|_{H^2} =\|w_n\|_{H^2} \leq C \big(1+\frac{1}{k}+\frac{1}{k^2}\big) \| q_n\|_{L^p}\|u \|_{L^s} \leq  C \big(1+\frac{1}{k}+\frac{1}{k^2}\big) B_u B_q,
\end{align}
where $B_u$ is the norm bound of the set $U$ and $B_q$ is the norm bound of the weakly (or weakly-$*$ if $p=\infty$) converging sequence $\{q_n\}_{n\in \N}$. This means that $S$ is bounded in $H^2(\Omega)$ and thus relatively compact in $H^1(\Omega).$
\end{proof}
For the later analysis we need the following auxiliary result.
\begin{lemma}
\label{lemma: Mq_n pointwise}
      Given a sequence $\{q_n\}_{n\in \N}\subset L^p(\Omega;[-1,\infty))$ and $q\in L^p(\Omega)$ with  $q_n \weakto q$ in the weak topology of $L^p_{}(\Omega), \ p\in(2,\infty)$, or $q_n \stackrel{*}\weakto q$  in the weak star topology $\sigma(L^\infty(\Omega),L^1{}(\Omega))$ when $p=\infty$, then   
      \begin{align}
          \widetilde {\mathcal   M_{q_n}} u\weakto \widetilde { \mathcal M_q}  u \text{ in } \sigma(L^2(\Omega),L^2(\Omega)),
      \end{align}
     pointwise $\fa u \in H^1(\Omega). $
\end{lemma}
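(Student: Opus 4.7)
The plan is to unfold the definition of weak convergence in $L^2(\Omega)$ and reduce it to a pairing against the weakly (or weak-$*$) convergent sequence $\{q_n\}$, then verify that the resulting test function lies in the appropriate predual space so that the given convergence hypothesis applies.

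Fix $u \in H^1(\Omega)$, and pick an arbitrary test function $\phi \in L^2(\Omega)$. The statement to prove is that
\begin{align*}
    (\widetilde{\mathcal M_{q_n}} u, \phi)_{L^2} = \int_\Omega q_n\, u\, \overline{\phi}\, dx \longrightarrow \int_\Omega q\, u\, \overline{\phi}\, dx = (\widetilde{\mathcal M_q} u, \phi)_{L^2},
\end{align*}
as $n \to \infty$. The key point is to verify that the function $u\,\overline{\phi}$ belongs to $L^{p'}(\Omega)$ (with $1/p+1/p'=1$), so that it serves as a legitimate dual pairing partner for the weakly convergent sequence $\{q_n\}$ in $L^p(\Omega)$.

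For the case $p \in (2,\infty)$, I would use the Sobolev embedding $H^1(\Omega) \hookrightarrow L^s(\Omega)$ with $s = 2p/(p-2)$ (exactly as was used in the proof of Lemma~\ref{lemma: M_q bounded} via the generalized Hölder inequality), which gives $u \in L^s(\Omega)$. Since $\phi \in L^2(\Omega)$, the generalized Hölder inequality yields $u\,\overline{\phi} \in L^r(\Omega)$ with
\begin{align*}
    \frac{1}{r} = \frac{1}{s} + \frac{1}{2} = \frac{p-2}{2p} + \frac{1}{2} = \frac{p-1}{p},
\end{align*}
so $r = p/(p-1) = p'$, as required. Therefore $u\,\overline{\phi} \in L^{p'}(\Omega)$, and the assumed weak convergence $q_n \weakto q$ in $\sigma(L^p,(L^p)')$ immediately delivers the desired limit. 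For the case $p = \infty$, we have $u,\phi \in L^2(\Omega)$, so by Cauchy–Schwarz $u\,\overline{\phi} \in L^1(\Omega)$, which is the predual of $L^\infty(\Omega)$; the weak-$*$ convergence $q_n \stackrel{*}{\weakto} q$ in $\sigma(L^\infty,L^1)$ then gives the result.

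There is no real obstacle here: the whole content of the lemma is to identify the correct exponent via Hölder so that the test function sits in the predual of $L^p$. The argument is essentially a one-line consequence of the definition of weak (and weak-$*$) convergence, once this exponent computation is done. The only mild subtlety is distinguishing the two cases $p < \infty$ and $p = \infty$ because the functional-analytic setting (weak vs.\ weak-$*$ convergence and the corresponding predual) differs.
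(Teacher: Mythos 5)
Your proof is correct and takes essentially the same route as the paper's: the $p=\infty$ case is identical, and for $p\in(2,\infty)$ your direct check that $u\overline{\phi}\in L^{p'}(\Omega)$ via the exponent computation $1/s+1/2 = 1/p'$ is just the unpacked version of the paper's appeal to weak-to-weak continuity of the bounded multiplication operator $q\mapsto qu$ from $L^p(\Omega)$ to $L^2(\Omega)$, whose adjoint is precisely the map $\phi\mapsto u\overline{\phi}$ you are verifying lands in $L^{p'}(\Omega)$. No gaps.
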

\begin{proof}
    We first study the case that $p\in(2,\infty).$ Given a fixed $u\in H^1(\Omega)$, the operator $ \widetilde {\mathcal M_u}q \ =\widetilde{\mathcal M_q }u=qu: L^p (\Omega)\to L^2(\Omega) $ is bounded as we saw in the proof of Lemma \ref{lemma: M_q bounded}. Therefore, if we take the weakly convergent sequence  
    \begin{align}
        q_n\weakto q \text{ in } \sigma(L^p(\Omega),(L^p(\Omega))'), \ p\in(2,\infty)
    \end{align}
    we obtain
    \begin{align}
        u q_n \weakto uq, \ \text{ in } \sigma(L^2(\Omega),L^2(\Omega)).
    \end{align}
    Thus, we obtain the limit
    \begin{align}
       \widetilde{\mathcal M_{q_n} }u\weakto \widetilde{ \mathcal M_q} u \text{ in } \sigma(L^2(\Omega),L^2(\Omega)), \ \fa u \in H^1(\Omega).
        \label{rel: lim Mqnu}
    \end{align}
    Now, when $p=\infty$ we assume 
    \begin{align}
         q_n\stackrel{*}\weakto q \text{ in } \sigma(L^\infty(\Omega),L^1(\Omega)),
    \end{align}
    which is equivalent to $\int_\Omega fq_n dx\to \int_\Omega fqdx, $ for any $ f\in L^1(\Omega). $ 
    We fix $u\in H^1(\Omega)$ and take a $v\in L^2(\Omega).$ Then 
    \begin{align}
         (uq_n-uq,v  )_ {L^2}=\int_\Omega \overline{v} (uq_n-uq)dx= \int_\Omega \overline{v}u (q_n-q) dx, \fa n\in \N.
    \end{align}
    Since $\overline{v}u \in L^1(\Omega)$ (Cauchy-Schwarz), and since $q_n \stackrel{*}{\weakto}$q we obtain that $(uq_n-uq,v  )_ {L^2}\to 0, \ n\to \infty,$ thus we also obtain \eqref{rel: lim Mqnu} in case $p=\infty.$
\end{proof}
The next result guarantees the pointwise convergence of the the sequence $(\mathcal V_{q_n})$, given a weakly convergent sequence, $\{q_n\}_{n\in \N}$.
\begin{lemma}
Given a sequence $\{q_n\}_{n\in \N}\subset L^p(\Omega;[-1,\infty))$ and $q\in L^p(\Omega)$ with  $q_n \weakto q$ in the weak topology of $L^p_{}(\Omega) $ when $ p\in(2,\infty)$, or $q_n \stackrel{*}\weakto q$  in the weak star topology $\sigma(L^\infty(\Omega),L^1{}(\Omega))$ when $p=\infty$, then  
\begin{align}
    \mathcal V_{q_n} u \stackrel{H^1}{\to} \mathcal V_{q} u, \text { as } n\to \infty,
\end{align}
for all $u$ in $H^1(\Omega)$.
\label{lemma: Vq_nu pointwise}
\end{lemma}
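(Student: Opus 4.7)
The plan is to reduce the desired $H^1$-convergence to strong convergence of the source terms $\mathcal M_{q_n} u \to \mathcal M_q u$ in $\overline{H^1(\Omega)}'$, after which the boundedness of the inverse operator $(\mathcal S - k^2 \mathcal M_1 - \imath k \mathcal B)^{-1}: \overline{H^1(\Omega)}' \to H^1(\Omega)$ from Remark \ref{rem: equivalent writing} immediately gives the conclusion.

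Fix $u \in H^1(\Omega)$. By Lemma \ref{lemma: Mq_n pointwise}, $\widetilde{\mathcal M_{q_n}} u \weakto \widetilde{\mathcal M_q} u$ in $L^2(\Omega)$. I would then upgrade this weak $L^2$-convergence to strong $\overline{H^1(\Omega)}'$-convergence by a compactness argument: the canonical embedding $\mathcal J: L^2(\Omega) \to \overline{H^1(\Omega)}'$ is compact since, up to the anti-linear Riesz identification $L^2 \cong \overline{L^2}'$, it is the adjoint of the compact Sobolev embedding $H^1(\Omega) \hookrightarrow L^2(\Omega)$ provided by Assumption \ref{assumption}, and Schauder's theorem on adjoints supplies compactness. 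Since compact operators map weakly convergent sequences to norm convergent ones, one obtains $\mathcal M_{q_n} u = \mathcal J(\widetilde{\mathcal M_{q_n}} u) \to \mathcal J(\widetilde{\mathcal M_q} u) = \mathcal M_q u$ strongly in $\overline{H^1(\Omega)}'$; applying the bounded inverse from Remark \ref{rem: equivalent writing} then delivers $\mathcal V_{q_n} u \to \mathcal V_q u$ in $H^1(\Omega)$.

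An alternative, more hands-on route is available directly from the preceding lemma: the collective-compactness bound supplies $\|\mathcal V_{q_n} u\|_{H^2(\Omega)} \leq C$ uniformly in $n$, so Rellich yields a subsequence converging weakly in $H^2(\Omega)$ and strongly in $H^1(\Omega)$ to some $w^\star$; passing to the limit in the weak formulation of $(-\Delta - k^2)\mathcal V_{q_n} u = q_n u$ with impedance boundary condition, using $q_n u \weakto q u$ in $L^2(\Omega)$, identifies $w^\star = \mathcal V_q u$, and uniqueness of the limit promotes the subsequential convergence to convergence of the full sequence. I do not foresee a serious obstacle either way; the only mildly delicate point is ensuring that weak $L^2$-convergence of $q_n u$ is strong enough to pass to the limit in the source term, which is handled cleanly by the compact embedding $\mathcal J$ in the first route and by pairing against a fixed test function in the second.
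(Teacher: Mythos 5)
Your argument is correct, and your first route is a genuine (if close) variant of the paper's proof. The paper also starts from Lemma \ref{lemma: Mq_n pointwise} and finishes by letting a compact operator turn the weak $L^2$-convergence of $\widetilde{\mathcal M_{q_n}}u$ into strong convergence, but it places the compactness \emph{after} the solution operator: it invokes the $H^2$-regularity estimate of Proposition \ref{prop: H^2 estimate} to view $id_{H^2\to H^1}\circ(\mathcal S-k^2\mathcal M_1-\imath k\mathcal B)^{-1}\mathcal J: L^2(\Omega)\to H^2(\Omega)\hookrightarrow H^1(\Omega)$ as a compact map and applies it to the weakly convergent sources. You instead place the compactness \emph{before} the solution operator, arguing via Schauder that $\mathcal J:L^2(\Omega)\to\overline{H^1(\Omega)}'$ is compact (as the adjoint of the Rellich embedding $H^1\hookrightarrow L^2$, which the paper uses in Appendix \ref{App: Well posedness}), so that $\mathcal M_{q_n}u\to\mathcal M_q u$ strongly in $\overline{H^1(\Omega)}'$, after which only the boundedness of the inverse from Remark \ref{rem: equivalent writing} is needed. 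What your version buys is independence from the $H^2$-regularity theory (and hence from the smoothness of $\partial\Omega$ required by Proposition \ref{prop: H^2 estimate}) for this particular lemma; what the paper's version buys is consistency with the collective-compactness lemma that precedes it, where the uniform $H^2$ bound is needed anyway. Your second, more hands-on route (uniform $H^2$ bound, Rellich, identification of the limit in the weak formulation, and the subsequence--subsequence argument) is also sound, though it is the longest of the three and ultimately relies on the same two ingredients.
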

\begin{proof}
    We fix $u\in H^1(\Omega)$ and using Lemma \ref{lemma: Mq_n pointwise} we obtain that
\begin{align}
        q_n \stackrel{\sigma(L^p,(L^p)')}{\weakto} q \Ra \widetilde{\mathcal M_{q_n} }u \stackrel{\sigma(L^2,L^2)}{\weakto} \widetilde{\mathcal M_{q} } u, \ p \in(2,\infty),
        \end{align}
        or
        \begin{align}
        q_n \stackrel{\stackrel{\sigma(L^\infty,L^1)}{*}}{\weakto} q \Ra \widetilde{\mathcal M_{q_n} }u \stackrel{\sigma(L^2,L^2)}{\weakto} \widetilde{\mathcal M_{q} }u, \ p=\infty,
        \end{align}
        as $n\to \infty.$
    The inverse operator of the constant coefficient Helmholtz operator is compact when we use volume sources that belong in $L^2(\Omega)$ (see Proposition \ref{prop: H^2 estimate} and remark \ref{rem: H^2 bounded}), 
\begin{align}
    id_{H^2\to H^1} \circ (\mathcal S-k^2\mathcal M_1 -\imath k \mathcal B)^{-1} \mathcal J :L^2(\Omega)\to H^2(\Omega) \stackrel{c}{\subspace} H^1(\Omega),
    \nonumber
\end{align}
with $\mathcal J:L^2(\Omega)\to \overline{H^1(\Omega)}' $ being the canonical embedding and $\mathcal J \widetilde{\mathcal M_{q} }=\mathcal M_{q}.$ Therefore we obtain the following strong convergence result
        \begin{align}
          id_{H^2\to H^1} \circ  (\mathcal S-k^2\mathcal M_1 -\imath k \mathcal B)^{-1} \mathcal  M_{q_n} u \stackrel{H^1}{\to}id_{H^2\to H^1} \circ  (\mathcal S-k^2\mathcal M_1 -\imath k \mathcal B)^{-1} \mathcal M_{q} u,
    \end{align}
    as $n\to\infty.$
\end{proof}
We are now ready to prove the main result of this subsection using all the results above.  
\begin{proof}[Proof of Theorem \ref{th: q mapsto u(q)}]
Since we want to treat the distributional form of the Helmhlotz equation \eqref{distibutional Helm}, we once again denote $u_i=(\mathcal S-k^2\mathcal M_1-ik\mathcal B)^{-1} f.$
    Therefore, as we saw in corollary \ref{coro: equivalent vLS-Helm}, instead of the distributional form of the Helmhlotz equation we can equivalently consider the variational LS equations
    \begin{align}
        u-k^2\mathcal V_q u=u_i ,
    \end{align}
     \begin{align}
        u_n-k^2\mathcal V_{q_n} u_n=u_i,
    \end{align}
    that hold in $H^1(\Omega)$ for $n\in \N$ assuming the given weak (or weak$-*$) converging sequence $\{q_n\}_{n\in \N}$ and the limit $q$. According to Theorem \ref{th: collectively compact theorem}, we observe that   $k^2 \mathcal V_q$ is compact and $(I-k^2\mathcal V_q)$ is injective. Also,  $\{\mathcal V_{q_n}\}_{n\in \N}$ is a sequence of collectively compact operators that strongly converges  pointwise as
    \begin{align}
        \mathcal V_{q_n}u\stackrel{H^1}\to  \mathcal V_qu, \ n\to \infty,
    \end{align}
    for $u\in H^1(\Omega).$
     Then, using inequality \eqref{rel: colectively compact estimate} found in Corollary  \eqref{cor: colectively compact estimate} of Appendix \ref{App: Collectively Compact Op}, we obtain the estimate
     \begin{align}
         \|u_n-u\|_{H^1}\leq Ck^2\| \mathcal V_{q_n}u-\mathcal V_qu\|_{H^1}.
     \end{align}
     Consequently we obtain the weak-to-strong sequential continuity of the parameter-to-wavefield map,
     \begin{align}
         q_n\stackrel{\sigma(L^p,(L^p)')}{\weakto} q \Ra u_n\stackrel{H^1}{\to}u, \ p\in(2,\infty),
     \end{align}
     or
      \begin{align}
         q_n\stackrel{\stackrel{\sigma(L^\infty,L^1)}{*}}{\weakto} q \Ra u_n\stackrel{H^1}{\to}u, \ p=\infty.
     \end{align}
\end{proof}
\subsection{Case-studies\label{sub: case studies}}

In this section, we use the weak-to-strong continuity of the parameter-to-wavefield map, as seen in Theorem \ref{th: q mapsto u(q)}, for showing existence of minimizers for two popular optimization methods, aimed at solving the Helmholtz inverse boundary value problem, namely reduced order model (ROM)-based inversion and conventional full waveform inversion (FWI). As we shall see, the continuity result is particularly important in the ROM setting, where the misfit functionals depend on volume inner products of the wavefields. 
\subsubsection{Data-driven reduced order model based inversion \label{sub: ROM based FWI}}
Recently, reduced order models (ROMs) corresponding to scattering problems on bounded domains have been employed for developing new waveform inversion techniques, see for example
\cite{tataris2025inversescatteringschrodingerequation, tataris2023reduced,borcea2023data,borcea2025reduced}. Below we summarize how we can obtain a data driven ROM from receiver and boundary data. The first step is to consider the wavenumbers
\begin{align*}k_1<k_2<\cdots<k_{N}
\end{align*}
that yield the wavefields,
\begin{align}
    u^{(s)}_1, u^{(s)}_2, \cdots u^{(s)}_{N},
\end{align}
that solve the Helmholtz problem 
\eqref{eq: distributional Helm} with given sources indexed by $s=1,...,M,$ 
corresponding to an underlying $m=1+q$ that needs to be reconstructed.

We consider measurements/data consisting of boundary traces 
\begin{align}
    \mathcal{D}=
    \{ u_i^{(s)}|_{\bd \Omega}, \bd_k u_i^{(s)}|_{\bd \Omega}\}_{i=1,...,{N}, \ s=1,...,M},
\end{align} 
that also include the traces of Frechet derivatives of the wavefields with respect to the wavenumbers.
We also consider the receiver responses,
\begin{align}
    \mathcal{E}= \{ \mathcal E^{(r,s)}_i=\langle f^{(r)}, u_i^{(s)}\rangle|\}_{i=1,...,{N}, \ r,s=1,...,M}.
\end{align}
We define the ROM matrices, in particular the \textit{mass}, \textit{stiffness} and \textit{boundary} matrices,  
$\mathbf M, \mathbf S, \mathbf B$ $\in$ $\C^{NM \times NM } $ respectively.
Due to the indexing of snapshots according to sampling
wavenumbers and source numbers, all three matrices ROM matrices have a block structure
consisting of $N \times N$ blocks of size $M \times M$ each, for example,
\begin{equation}
\mathbf S = \begin{bmatrix}
\mathbf S_{11} & \mathbf S_{12} & \ldots & \mathbf S_{1N} \\ 
\mathbf S_{21} & \mathbf S_{22} & \ldots & \mathbf S_{2N} \\
\vdots & \vdots & \ddots & \vdots \\
\mathbf S_{NN} & \mathbf S_{NN} & \ldots & \mathbf S_{NN} \\
\end{bmatrix} \in \C^{NM \times NM}.
\label{eqn:mats}
\end{equation}
The elements of the block matrices include inner products of the wavefields as
\begin{align}
    [\mathbf M_{ij}]_{rs}=  \int_\Omega (1+q) u_i^{(s)} \overline{u_j^{(r)}}dx,   \ s,r=1,...,M,
\end{align}
\begin{align}
    [\mathbf S_{ij}]_{rs}= \int_\Omega \nabla u_i^{(s)} \overline{\nabla u_j^{(r)}}dx,   \ s,r=1,...,M,
    \label{rel: stiffness elements}
\end{align}
and 
\begin{align}
    [\mathbf B_{ij}]_{rs}= \int_\Omega  u_i^{(s)} \overline{u_j^{(r)}}d\Sigma(x) , \ s,r=1,...,M,
\end{align}
for $i,j=1,...,N.$
The key property of ROMs is that it is possible to compute exactly the entries of the matrices $\mathbf{M,S}$ using the entries of the matrix $\mathbf B$ and the data $\mathcal D$  and $\mathcal{ E}$, see \cite{TristanAndreas23ROM, tataris2023reduced, tataris2025inversescatteringschrodingerequation} 

One variant of the ROM based FWI can be defined using a misfit functional that is based on volume inner products of the wavefields via the elements of the ROM stiffness matrix. Specifically, for a trial parameter $q$, 
we define the functional 
\begin{align*}
    \phi_{ROM}(q):=\frac{1}{2}\|\mathbf S-\mathbf{S}(q)\|_F^2,
\end{align*}
where $\|\cdot\|_F$ denoting the Frobenious norm and $\mathbf{S}(q)$ is the stiffness matrix corresponding to a trial parameter $q$.  Equivalently, the misfit functional can be defined via relation \eqref{rel: stiffness elements} as well. Therefore, we can also write 
\begin{align}
    \phi_{ROM}(q) :=\frac{1}{2} \sum_{i,j=1}^N\sum_{r,s=1}^M \phi_{ijrs}(q),
\end{align}
where
\begin{align}
    \nonumber
    \phi_{ijrs}(q)=\Big|\int_\Omega \nabla u_i^{(s)}(q) \overline{\nabla  u_j^{(r)}(q)}dx-\int_\Omega \nabla  u^{(s)}_i \overline{\nabla  u^{(r)}_j}dx \Big|^2, 
\end{align}
where $u_i^{(s)}(q)$ solves the Helmholtz problem \eqref{eq: distributional Helm} at wavenumber $k_i$ corresponding to a source indexed by $s$. 
As has been observed in the one dimensional case \cite{tataris2023reduced}, such a ROM based functional can pose challenges when it comes to showing existense of minimizers when using regularization. In particular, the problem comes from the inner products between wavefields and their behaviour under weak convergence. 
However, as we will demonstrate, Theorem \ref{th: q mapsto u(q)} plays a key role in overcoming these difficulties and in establishing the existence of minimizers.
To do that, we first need to
define the admissible set
\begin{align}
    \mathbb{Q}_{ad}=\{q \in L^p(\Omega): q\geq -1 \}.
\end{align}
This set is convex, weakly closed when $p\in(2,\infty)$ or weakly$-*$ closed when $p=\infty$ as was seen in proposition \ref{prop: q_n to q means q geq 0}. 
The following theorem demonstrates the use of Theorem \ref{th: q mapsto u(q)} and guarantees the existence of local minimizers of the  ROM based inversion functional under $L^p-$norm regularization.
\begin{theorem}
\label{th: existence for ROM FWI}
    Given a regularization parameter $a>0$, the regularized  ROM based inversion functional
    \begin{align}
        \phi_{\text{ROM}}(q;a):=\phi_{\text{ROM}}(q)+a \|q\|_{L^p(\Omega)}
    \end{align}
    admits a minimizer in $\mathbb Q_{ad}.$
\end{theorem}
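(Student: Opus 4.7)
The plan is to apply the direct method of the calculus of variations, using the weak-to-strong continuity of the parameter-to-state map (Theorem \ref{th: q mapsto u(q)}) to handle the data-fidelity term and the lower semi-continuity of the $L^p$-norm to handle the regularization term.

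First I would observe that $\phi_{\text{ROM}}(\cdot;a) \geq 0$, so the infimum $\mu := \inf_{q \in \mathbb{Q}_{ad}} \phi_{\text{ROM}}(q;a)$ is finite and nonnegative. Picking a minimizing sequence $\{q_n\}_{n \in \N} \subset \mathbb{Q}_{ad}$, the regularization term immediately yields
\begin{align*}
a \|q_n\|_{L^p(\Omega)} \leq \phi_{\text{ROM}}(q_n;a) \leq \mu + 1
\end{align*}
for all $n$ sufficiently large, so $\{q_n\}$ is bounded in $L^p(\Omega)$. When $p\in(2,\infty)$, $L^p(\Omega)$ is reflexive and the Banach-Alaoglu theorem furnishes a weakly convergent subsequence (not relabeled) $q_n \weakto q^\star$ in $\sigma(L^p,(L^p)')$; when $p=\infty$, the sequential Banach-Alaoglu theorem (applicable since $L^1(\Omega)$ is separable) provides a weakly-$*$ convergent subsequence. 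By Proposition \ref{prop: q_n to q means q geq 0}, the limit $q^\star$ satisfies $q^\star \geq -1$ almost everywhere, so $q^\star \in \mathbb{Q}_{ad}$.

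Next I would establish lower semi-continuity of the two pieces separately. For the regularization term, the $L^p$-norm is weakly (respectively weakly-$*$) lower semi-continuous, so $\|q^\star\|_{L^p} \leq \liminf_{n\to\infty} \|q_n\|_{L^p}$. For the data-fidelity term $\phi_{\text{ROM}}$, Theorem \ref{th: q mapsto u(q)} gives $u_i^{(s)}(q_n) \to u_i^{(s)}(q^\star)$ strongly in $H^1(\Omega)$ for each index pair $(i,s)$, hence $\nabla u_i^{(s)}(q_n) \to \nabla u_i^{(s)}(q^\star)$ strongly in $L^2(\Omega;\C^2)$. Each summand
\begin{align*}
\phi_{ijrs}(q_n) = \Big| \int_\Omega \nabla u_i^{(s)}(q_n) \cdot \overline{\nabla u_j^{(r)}(q_n)}\, dx - \int_\Omega \nabla u_i^{(s)} \cdot \overline{\nabla u_j^{(r)}}\, dx \Big|^2
\end{align*}
is a continuous function of a pair of $L^2$-strongly convergent sequences (by Cauchy-Schwarz applied to the difference of two bilinear pairings), so $\phi_{ijrs}(q_n) \to \phi_{ijrs}(q^\star)$ and therefore $\phi_{\text{ROM}}(q_n) \to \phi_{\text{ROM}}(q^\star)$ along the subsequence. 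Combining both pieces gives $\phi_{\text{ROM}}(q^\star;a) \leq \liminf_{n\to\infty} \phi_{\text{ROM}}(q_n;a) = \mu$, so $q^\star$ is a minimizer.

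The main obstacle is the potentially pathological behaviour of the $\phi_{\text{ROM}}$ term under weak convergence: it involves volume inner products of gradients of wavefields, and weak convergence of parameters alone does not imply convergence of such quadratic-in-the-state quantities. This is exactly where the strength of Theorem \ref{th: q mapsto u(q)} is used, upgrading weak convergence $q_n \weakto q^\star$ to strong $H^1$-convergence of the wavefields and allowing continuity, not merely lower semi-continuity, of the data-fidelity functional. Everything else reduces to routine application of the direct method, with the only minor care needed being the distinction between reflexive ($p<\infty$) and non-reflexive ($p=\infty$) cases when extracting a convergent subsequence, both already covered by Proposition \ref{prop: q_n to q means q geq 0} and Theorem \ref{th: q mapsto u(q)}.
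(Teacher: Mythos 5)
Your proposal is correct and follows essentially the same route as the paper: a minimizing sequence bounded in $L^p$ via the regularization term, extraction of a weak (or weak-$*$) limit lying in $\mathbb{Q}_{ad}$ by Proposition \ref{prop: q_n to q means q geq 0}, strong $H^1$-convergence of the wavefields from Theorem \ref{th: q mapsto u(q)} yielding convergence of the data-fit term, and weak lower semi-continuity of the $L^p$-norm for the penalty. The only difference is cosmetic: you spell out the Cauchy--Schwarz justification for continuity of the gradient inner products, which the paper leaves implicit.
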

\begin{proof}
    Since $\phi_{\text{ROM}}(q;a)\geq 0$ for all $q \in \mathbb Q_{\text{ad}}$, there exists $\theta>0$ such that
\begin{align}
    \theta =\inf_{q \in \mathbb Q_{\text{ad}}}\phi_{\text{ROM}}(q;a).
\end{align}
Therefore, there is a sequence that converges to $\theta$, let that sequence be
\begin{align}
    \{\phi_{\text{ROM}}(q_n;a)\}_{n\in \mathbb{N}}  \subset \{ \phi_{\text{ROM}}(q;a) : q\in \mathbb Q_{\text{ad}}\}.
\end{align} 
Since we use regularization, and the sequence $\{\phi_{\text{ROM}}(q_n;a)\}_{n\in \mathbb{N}}$ is included in a bounded interval, then the sequence $\{q_n\}_{n\in \mathbb{N}}\subset \mathbb Q_{ad}$ is bounded in $L^p(\Omega)$. Therefore, there is a subsequence 
$\{q_\nu\}_{\nu\in N_1} \subset \{q_n\}_{n\in \mathbb{N}}$ that has a weak limit, let $\widehat q$, in
$\sigma(L^p(\Omega),{L^p(\Omega)}')$ or in $\sigma(L^\infty (\Omega),L^1(\Omega))$ if $p=\infty$.
As we saw in Proposition \ref{prop: q_n to q means q geq 0}, the limit element $\widehat q$ is included in $\mathbb{Q}_{ad}$ since the set is weakly closed when $p\in(2,\infty),$ and closed in the weak star topology $\sigma(L^\infty(\Omega),L^1(\Omega))$ when $p=\infty.$
Finally, by Theorem \ref{th: q mapsto u(q)} it follows that we have the  following strong limit
\begin{align}
    u_i^{(s)}(q_\nu) \to u^{(s)}_i(q),  \text{  in } H^1(\Omega), \text{ as } \nu\to \infty, \  \nu \in N_1,
\end{align}
for $i=1,...,N, \ s=1,...,M$.
Moreover, since $\nabla :H^1(\Omega)\to L^2(\Omega;\C^2)$ is bounded, we obtain that
\begin{align}
  \nabla u_i^{(s)}(q_\nu )\to \nabla u_i^{(s)}(q) , \ \text{ in } L^2(\Omega), \text{ as } \nu\to \infty, \  \nu \in N_1,
\end{align}
for all $i=1,...,N, \ s=1,...,M.$ 
Finally, for all $i,j=1,...,N, \ r,s=1,...,M,$ we obtain for the $(i,j,r,s)-$component of the Frobenius norm sum of the data fit term
\begin{align*}
     \lim_{\nu \in N_1} \phi_{ijrs}(q_\nu) :=\lim_{\nu \in N_1}\Big|\int_\Omega \nabla u_{i}^{(s)}(q_\nu) \cdot\overline{\nabla{u_j^{(r)}(q_\nu)}}dx-S_{ij}^{obs.}\Big|^2=\\
    \Big|\int_\Omega \nabla u_{i}^{(s)}(q) \cdot\overline{\nabla{u_j^{(r)}(q)}}dx-S_{ij}^{obs.}\Big|^2=\phi_{ijrs}(\widehat q).
\end{align*}
Therefore, by the strong convergence of the sequences $\{\phi_{ijrs}(m_\nu)\}_{\nu \in N_1}, \ i,j=1,...,N, r,s=1,...,M$ and the weak lower semicontinuity of the norm, we obtain
\begin{align}
    \theta=\lim_{\nu \in N_1 } \phi_{\text{ROM}}(q_\nu;a) 
    \geq 
    \liminf_{\nu \in N_1} \phi_{\text{ROM}}(q_\nu;a)  = \\ \nonumber \sum_{ij=1}^{N}\sum_{ij=1}^{M}\phi_{ij}(\widehat q)+ a\liminf_{\nu \in N_1} \| q_\nu\|_{L^p(\Omega)} \geq   \phi_{\text{ROM}}(\widehat q)\geq \theta,
\end{align}
which means that the limit element $\widehat q\in \mathbb Q_{ad}$ is a local minimizer of $\phi_{ROM}.$
\end{proof}
\subsubsection{Data misfit conventional FWI}
We use a similar setting as in the previous Section \ref{sub: ROM based FWI}. We use the data set $\mathcal E$ and assume that the sources act as receivers as well. We define the conventional data misfit FWI functional
\begin{align}
    \phi_{\text{FWI}}(q)=\frac{1}{2}\sum_{r,s=1}^{M}\sum_{i=1}^{N}|\langle f^{(r)}, u^{(s)}_i(q) \rangle- \mathcal E^{(r,s)}_i|^2, \quad q\in \mathbb Q_{ad},
\end{align}
that measures the mismatch between observed and modeled data.
We obtain similarly as in the ROM based functional case of the previous section, that there exist local minimizers for $\phi_{\text{FWI}}$ in $\mathbb Q_{ad}.$

\begin{theorem}
    Given a regularization parameter $a>0$, the regularized FWI functional
    \begin{align}
        \phi_{\text{FWI}}(q;a):=\phi_{\text{FWI}}(q)+a \|q\|_{L^p(\Omega)}
    \end{align}
    admits a minimizer in $\mathbb Q_{ad}.$
\end{theorem}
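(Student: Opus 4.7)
The plan is to mirror the proof of Theorem \ref{th: existence for ROM FWI} almost verbatim, with the simplification that the data misfit terms in $\phi_{\text{FWI}}$ are continuous linear (rather than bilinear) functionals of the wavefields. First I would set $\theta := \inf_{q \in \mathbb Q_{ad}} \phi_{\text{FWI}}(q;a) \geq 0$ and choose a minimizing sequence $\{q_n\}_{n\in\N} \subset \mathbb Q_{ad}$ with $\phi_{\text{FWI}}(q_n;a) \to \theta$. Because $a>0$, the regularization term $a\|q_n\|_{L^p(\Omega)}$ is bounded, so $\{q_n\}$ is bounded in $L^p(\Omega)$. By reflexivity of $L^p$ for $p\in(2,\infty)$ (respectively Banach--Alaoglu for $p=\infty$), I can extract a subsequence $\{q_\nu\}_{\nu \in N_1}$ which converges weakly (respectively weakly-$*$) to some $\hat q \in L^p(\Omega)$, and by Proposition \ref{prop: q_n to q means q geq 0} the pointwise lower bound $\hat q \geq -1$ a.e.\ is preserved, so $\hat q \in \mathbb Q_{ad}$.

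Next I would invoke Theorem \ref{th: q mapsto u(q)} to obtain the strong limits
\begin{align}
    u^{(s)}_i(q_\nu) \stackrel{H^1}{\to} u^{(s)}_i(\hat q), \quad \nu \in N_1,\ \nu \to \infty,
\end{align}
for every $i=1,\dots,N$ and $s=1,\dots,M$. Since each source $f^{(r)} \in \overline{H^1(\Omega)}'$ defines a bounded antilinear functional on $H^1(\Omega)$, the receiver responses converge as scalars:
\begin{align}
    \langle f^{(r)}, u^{(s)}_i(q_\nu)\rangle \to \langle f^{(r)}, u^{(s)}_i(\hat q)\rangle, \quad \nu\to \infty,\ \nu \in N_1.
\end{align}
Consequently each term $|\langle f^{(r)}, u^{(s)}_i(q_\nu)\rangle - \mathcal E^{(r,s)}_i|^2$ converges to $|\langle f^{(r)}, u^{(s)}_i(\hat q)\rangle - \mathcal E^{(r,s)}_i|^2$, and summing over the finite index set gives $\phi_{\text{FWI}}(q_\nu) \to \phi_{\text{FWI}}(\hat q)$.

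Finally I would combine this with the weak (respectively weak-$*$) lower semicontinuity of the $L^p$-norm to write
\begin{align}
    \theta = \lim_{\nu \in N_1} \phi_{\text{FWI}}(q_\nu;a) \geq \phi_{\text{FWI}}(\hat q) + a \liminf_{\nu \in N_1} \|q_\nu\|_{L^p(\Omega)} \geq \phi_{\text{FWI}}(\hat q;a) \geq \theta,
\end{align}
so $\hat q$ attains the infimum and is therefore a minimizer in $\mathbb Q_{ad}$. I do not anticipate any serious obstacle: the only subtlety worth checking is that the passage $u^{(s)}_i(q_\nu) \to u^{(s)}_i(\hat q)$ indeed transfers to the data, but this is immediate from the boundedness of $f^{(r)}$ as an antilinear functional on $H^1(\Omega)$, which is built into the very formulation of the forward problem. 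Compared with Theorem \ref{th: existence for ROM FWI}, the argument is in fact strictly easier since we avoid the bilinear volume products of gradients that required the $H^1$-strong convergence to control two factors simultaneously.
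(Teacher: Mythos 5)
Your proposal is correct and follows essentially the same route as the paper: a minimizing sequence bounded in $L^p$ by the regularization term, weak (or weak-$*$) subsequential convergence to some $\hat q\in\mathbb Q_{ad}$, strong $H^1$ convergence of the wavefields via Theorem \ref{th: q mapsto u(q)}, continuity of the data-fit terms, and weak lower semicontinuity of the norm. Your added remark that continuity of the misfit follows from $f^{(r)}$ being a bounded antilinear functional on $H^1(\Omega)$ is a correct and slightly more explicit justification than the paper provides.
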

\begin{proof}
    We begin the proof similarly as the proof of Theorem \ref{th: existence for ROM FWI} that describes the ROM based case.
    First we denote the infimum of the functional on the admissible set as
    \begin{align}
    \zeta =\inf_{q \in \mathbb Q_{\text{ad}}}\phi_{\text{FWI}}(q;a).
\end{align}
Similarly to the ROM based case, since we are using a regularization term, there is a minimizing sequence $ \{q_n\}_{n\in \mathbb{N}}$ that is bounded in $L^p(\Omega)$. Consequently, we can extract a subsequence 
$\{q_\nu\}_{\nu\in N_1} \subset \{q_n\}_{n\in \mathbb{N}}$ that converges weakly to a limit $\widehat q$, in
$\sigma(L^p(\Omega),{L^p(\Omega)}')$ when $p\in(2,\infty)$, or weakly$-*$ in $\sigma(L^\infty (\Omega),L^1(\Omega))$ when $p=\infty$. 
The limit element $\widehat q$ is included in $\mathbb{Q}_{ad}$ as we saw in the ROM based case.
We again conclude the strong convergence of the wavefields using Theorem \ref{th: q mapsto u(q)}: 
\begin{align}
    u_i^{(s)}(q_\nu) \to u^{(s)}_i(\widehat q),  \text{  in } H^1(\Omega), \text{ as } \ \nu \to \infty, \ \nu \in N_1,
\end{align}
for $i=1,...,N, \ s=1,...,M$.
Finally, for all $i=1,...,N, \ r,s=1,...,M,$ we obtain 
\begin{align*}
     \lim_{\nu \in N_1} |\langle f^{(r)},u^{(r)}_i(q_\nu) -\mathcal E ^{(r,s)}_i\rangle|=|\langle f^{(r)},u^{(r)}_i(\hat q) -\mathcal E ^{(r,s)}_i\rangle|.
\end{align*}
Therefore, by the strong convergence of the data-fit terms and the weak lower semicontinuity of the norm, we obtain
\begin{align}
    \zeta=\lim_{\nu\in N_1} \phi_{\text{ROM}}(q_\nu;a) 
    \geq 
    \liminf_{\nu\in N_1} \phi_{\text{ROM}}(q_\nu;a)  \geq 
    \liminf_{\nu \in N_1} \phi_{\text{ROM}}(\widehat q;a) = \zeta,
\end{align}
meaning that $\widehat q$ is a local minimizer.
\end{proof}

\section{Conclusions \label{sec: conclusions}}
In this paper we formulated and studied a new, Lippmann-Schwinger type operator equation based on the variational form of the Helmholtz impedance boundary value problem. Using the properties of the variational Lippmann-Schwinger operator we showed weak-to-strong convergence for the parameter-to-wavefield map and established existence of minimizers for frequency domain reduced order model based and conventional waveform inversion methods under $L^p, \ p\in  (2,\infty]$ type regularization. Also, given our proposed framework and results, it is straightworward to show the existence of minimizers in case we deal with parameters that lie in spaces that are continuously embedded in $L^p, \ p\in (2,\infty]$.

One important next step for future research is to find a way to extend the results of this paper to the case when the parameter belongs to $L^p, p\in [1,2]$. As we saw, we obtain compactness of our variational Lippmann-Schwinger operator using classical regularity theory for the solutions of the Helmholtz equation. However, using similar regularity techniques to extend the tools of this paper might fail when we introduce parameters in $L^p, p\in [1,2]$. Studying the coupled system of integral equations that we outlined in the introduction might be the path to successfully extending our methods to the case when the parameter lies in $L^p, p\in [1,2]$. Regarding the variational Lippmann-Schwinger equation itself, there can be many ways one can use it in the context of solving the inverse boundary value problem.
An interesting one is to combine the proposed variational Lippmann-Schwinger equation with reduced order models in order to obtain a linearized inversion approach aimed to estimate $q.$ In particular, as a first step, reduced order model based internal solutions can be used for the state estimation. Following that, the variational Lippmann-Schwinger  equation can be used in the spirit of classical Born approximation to estimate the medium parameter.
Future research will explore towards this direction.

\appendix
\section{Well posedness of  the Helmholtz problem \label{App: Well posedness}}
\begin{proof}[Proof of proposition \ref{pr: forward problem well posedness}]
For the sake of completeness we include a sketch of the proof. We refer to \cite{Wald2018} and \cite{Bao_2005} for more details.
We define the forms 
$a_1,a_2:H^1(\Omega)^2\to \C$ with
\begin{align}
    a_1(u,v)=\int_\Omega \nabla u \cdot \cl {\nabla v}dx-\imath k\int_{\bd\Omega} u\cl vd\Sigma(x),
\end{align}
\begin{align}
    a_2(u,v)=-\int_\Omega mu\overline{v}dx,   \ u,v\in H^1(\Omega).
\end{align}
We note that $a_1$ is a coercive and bounded form and claim that $a_2$ is bounded (see proposition \ref{prop: bounded a_2}).
For $a_1,$ we denote with $L,U$ the low and upper bounds in the coercivity estimate respectively.
We define the linear Riesz isomorphism,
\begin{align}
    \Phi: H^1(\Omega)\to \overline{H^1(\Omega)}', 
\end{align}
with $\Phi u= (u,\cdot )_{H^1}, u\in H^1(\Omega).$ 
Since $a_1(u,\cdot)$ is an antilinear functional on $H^1(\Omega),$ and using the Riesz representation theorem we define $\mathcal T:H^1(\Omega)\to H^1(\Omega)$ 
with
\begin{align}
    a_1(u,v)=(\mathcal Tu,v)_{H^1}.
\end{align}
$\mathcal T$ is one-to-one onto and we have the estimates
$\|\mathcal T\|_\infty\leq U, \ \| \mathcal T^{-1}\|_{\infty} \leq L.$
Also, we define the linear operator $\mathcal W: L^2(\Omega) \to \overline{H^1(\Omega)}', \ u\stackrel{\mathcal W }{\mapsto} a_2(u,\cdot).$
We also define the linear map
\begin{align}
    \mathcal A_1=\mathcal T^{-1} \Phi^{-1} \mathcal W: L^2(\Omega) \to H^1(\Omega)
\end{align}
and
\begin{align}
    \mathcal A= \mathcal A_1 \circ i_{H^1 \to L^2}:H^1(\Omega) \stackrel{c}{\subspace} L^2(\Omega)\to H^1(\Omega) , \ s\mapsto \mathcal A_1 s.
\end{align}
$\mathcal A$ is bounded as composition of bounded operators.
Also, for $s\in H^1(\Omega), w \in H^1(\Omega)$, we have $a_1(\mathcal As,w)=a_2(s,w).$
We claim that $\mathcal I+k^2 \mathcal A$ is one-to-one.
Let now $y \in H^1(\Omega).$
Finding a solution of the differential equation, is equivalent to finding $y\in H^1(\Omega)$ that satisfies
\begin{align*}
    a_1(y,v)+k^2 a_2(y,v)=\langle  f,v\rangle, \fa v \in H^1
    \stackrel{}{\iff}
    \\
    a_1(y,v)+k^2a_1(\mathcal Ay,v)=\langle f,v\rangle, \ \fa v \in H^1 \iff
\end{align*}
\begin{align}
    a_1(y+k^2\mathcal Ay,v)=\langle  f,v\rangle, \ \fa v \in H^1 \iff 
\end{align}
\begin{align}
    (\mathcal T(y+k^2\mathcal Ay),v)_{H^1(\Omega)}=\langle f,v\rangle, \ \fa v \in H^1 \Ra
\end{align}
\begin{align}
    \Phi \mathcal T (\mathcal I+k^2\mathcal A)y\stackrel{\overline{H^1(\Omega)}'}{=}f \iff  (\mathcal I+k^2 \mathcal A)y=\mathcal T^{-1} \Phi^{-1} f \in H^1(\Omega).
\end{align}
Since $\mathcal A \in \mathcal{L}(H^1(\Omega),H^1(\Omega))$ is compact and $\mathcal I+k^2\mathcal A$ is injective, using the Fredholm alternative we obtain that there exists a unique element $y\in H^1(\Omega) $ that satisfies the last equation. Finally, we obtain the forward stability estimate
\begin{align*}
    \| y \|_{H^1} \leq \| (\mathcal I+k^2\mathcal A)^{-1}\|_{\mathcal{L}(H^1,H^1)}  \| \mathcal T^{-1}\|_{\mathcal{L}(H^1,H^1)} \| \Phi^{-1}\|_{\mathcal{L} (H^1,\overline{H^1}')} \| f \|_{\overline{H^1}'}.
\end{align*}
\end{proof}
\begin{proposition}\label{prop: bounded a_2}
    Given $q$ as in assumption \ref{assumption}, $a_2$ is a bounded form on $H^1(\Omega).$
\end{proposition}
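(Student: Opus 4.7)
The plan is to split $m=1+q$ and handle the two resulting terms separately. Writing
\begin{align*}
a_2(u,v) = -\int_\Omega u\overline{v}\,dx - \int_\Omega q\,u\overline{v}\,dx,
\end{align*}
the first summand is immediately controlled by Cauchy--Schwarz in $L^2(\Omega)$ and the continuity of the embedding $H^1(\Omega)\hookrightarrow L^2(\Omega)$, yielding a bound by $\|u\|_{H^1}\|v\|_{H^1}$ up to a constant. So the real work lies in bounding the second summand when $q$ is only $L^p$.

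For the $q\,u\overline{v}$ term the natural tool is the generalized H\"older inequality with three factors. I choose exponents $p,s,s$ satisfying $1/p+2/s=1$, which forces $s=2p/(p-1)$. For $p\in(2,\infty)$ this gives $s\in(2,4)$, and for $p=\infty$ it gives $s=2$. Applying H\"older then yields
\begin{align*}
\Bigl|\int_\Omega q\,u\overline{v}\,dx\Bigr|
\leq \|q\|_{L^p(\Omega)}\,\|u\|_{L^s(\Omega)}\,\|v\|_{L^s(\Omega)} .
\end{align*}

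The final step is to replace the $L^s$-norms by $H^1$-norms via the Sobolev embedding $H^1(\Omega)\hookrightarrow L^s(\Omega)$. Since $\Omega\subset\R^2$ is bounded with at least $C^1$ boundary (Assumption~\ref{assumption}), this embedding is continuous for every $s\in[1,\infty)$, so in particular for our chosen $s\in[2,4)$. Denoting by $C_s$ the embedding constant, we conclude
\begin{align*}
|a_2(u,v)|\leq \bigl(1+C_s^{\,2}\|q\|_{L^p(\Omega)}\bigr)\|u\|_{H^1(\Omega)}\|v\|_{H^1(\Omega)} ,
\end{align*}
which establishes boundedness of $a_2$ on $H^1(\Omega)\times H^1(\Omega)$. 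The only subtle point is checking that the chosen exponent $s$ lies in the Sobolev-admissible range; this is why the hypothesis $p>2$ in Assumption~\ref{assumption} is used, and it is precisely the two-dimensionality of $\Omega$ that makes the argument go through for every such $p$. The case $p=\infty$ reduces to the trivial bound $\|qu\|_{L^2}\leq\|q\|_{L^\infty}\|u\|_{L^2}$ and can be treated separately or as the limiting case of the above.
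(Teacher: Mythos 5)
Your proof is correct. The exponent bookkeeping checks out: with $1/p+2/s=1$ one gets $s=2p/(p-1)\in(2,4)$ for $p\in(2,\infty)$, and since $\Omega\subset\R^2$ is bounded with $C^1$ boundary the embedding $H^1(\Omega)\hookrightarrow L^s(\Omega)$ is continuous for every finite $s$, so the three-factor H\"older step closes; the $p=\infty$ case is trivial as you say. Your route differs from the paper's in two small but consequential ways. First, the paper does not split $m=1+q$: it estimates $|(u,mv)_{L^2}|$ by Cauchy--Schwarz and then applies a two-factor H\"older inequality $\|mv\|_{L^2}\le\|m\|_{L^p}\|v\|_{L^s}$ with $s=2p/(p-2)$, placing all of the extra integrability on $v$ alone. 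The payoff of that asymmetric distribution is the stronger bound $|a_2(u,v)|\le C\|m\|_{L^p}\|u\|_{L^2}\|v\|_{H^1}$, i.e.\ boundedness of $a_2$ on $L^2(\Omega)\times H^1(\Omega)$, which is exactly what the well-posedness argument needs in order to define $\mathcal W:L^2(\Omega)\to\overline{H^1(\Omega)}'$ and then obtain compactness of $\mathcal A$ by precomposing with the compact embedding $H^1(\Omega)\hookrightarrow L^2(\Omega)$. Your symmetric choice of exponents yields only the $H^1\times H^1$ bound, which does prove the proposition as literally stated but would not by itself justify defining $\mathcal W$ on $L^2(\Omega)$. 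If you want your argument to serve that downstream purpose, run the same three-factor H\"older with exponents $(p,2,s)$ satisfying $1/p+1/2+1/s=1$, keeping $u$ in $L^2$; this recovers precisely the paper's exponent $s=2p/(p-2)$ and its asymmetric estimate.
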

\begin{proof}
We remind the reader that $m=1+q.$ We get
\begin{align}
\nonumber
    |a_2(u,v)|=|(mu,v)_{L^2}|=|(u,mv)_{L^2}| \leq \\ \| u\|_{L^2} \|m v\|_{L^2} \leq \| u\|_{L^2} \| m\|_{L^p
    } \| v\|_{L^s}  \leq C \| m\|_{L^p
    } \|u\|_{L^2} \| v\|_{H^1},
    \nonumber
\end{align}
with $s,p$ connected via the generalized Hölder inequality as
$sp/(p+s)$, or $s=2p/(p-2)$ if $p\in (2,\infty)$ or $s=2,p=\infty.$
We also obtain that
\begin{align}
    \|a_2(u,\cdot)\|_{\overline{H^1}'} \leq \| m\|_p \|u\|_{L^2}\Ra \| \mathcal W \|_{\mathcal{L}(L^2,\overline{H^1}')}\leq \| m \|_{L^p}.
\end{align}
\end{proof}
\section{Collectively Compact Operators \label{App: Collectively Compact Op}}
Here we name a few results regarding collectively compact operators. For more details refer to \cite{kresslinear}.
\begin{definition}
    A collection $\mathcal A =\{ A_i: X\to Y, i\in I \}$ of linear operators mapping a normed space $X$ to a normed space $Y$ is called collectively compact if for each bounded set $U\subset X$ the set 
    \begin{align}
        \{A_i \phi: \phi \in U, i\in I \}
    \end{align}
    is relatively compact in $Y.$
\end{definition}
\begin{remark}
    Given a sequence of collectively compact operators $\{A_n\}_{n\in \N}$ with 
    \begin{align}
        A_nx\to Ax, \ n\to \infty,
    \end{align}
    then the limit operator is compact.
\end{remark}
In the following, the operator norm will be denoted as $\|\cdot\|_\infty$.
\begin{theorem}
    Let $X,Y,Z$ be Banach spaces and $\{A_i\}_{i\in I}:X\to Y$ be a set of collectively compact operators. Let  $L_n:Y\to Z,  \ n \in \N$ be a a sequence of operators that converges pointwise to $L$ as
    \begin{align}
        L_ny\to Ly, \  n\to \infty, \  y\in Y.
    \end{align}
    Then 
    \begin{align}
        \|(L_n-L) A_i\|_{\infty} \to 0, \ \ n\to \infty,
    \end{align}
    uniformly, for $i \in I$, or equivallently
    \begin{align}
        \sup_{i \in I}  \|(L_n-L) A_i\|_{\infty}\to 0, \ n \to \infty. 
    \end{align}
\end{theorem}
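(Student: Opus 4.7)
The plan is to exploit the relative compactness of the \emph{collective image set} $K=\{A_i x : i \in I,\ \|x\|_X\le 1\}$ to reduce the desired uniform statement to pointwise convergence on a finite $\varepsilon$-net. The argument is the classical one due to Anselone and proceeds via a standard $3\varepsilon$-type triangle-inequality estimate.

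First, I would establish that $\sup_n \|L_n\|_\infty < \infty$. Since $L_n y \to L y$ in $Z$ for every $y\in Y$, for each fixed $y$ the orbit $\{L_n y\}_{n\in\N}$ is bounded in $Z$. By the Banach--Steinhaus (uniform boundedness) theorem applied to the family $\{L_n\}$ on the Banach space $Y$, we get $\sup_n \|L_n\|_\infty \le C$ for some $C>0$. The pointwise limit $L$ is itself bounded (as a pointwise limit of bounded operators on a Banach space), so $M:=\sup_n \|L_n - L\|_\infty \le C + \|L\|_\infty < \infty$.

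Next, fix $\varepsilon>0$. By the defining property of collectively compact operators (applied to the bounded set $U = B_X$, the closed unit ball of $X$), the set $K=\{A_i x : i\in I,\ \|x\|_X\le 1\}$ is relatively compact in $Y$, hence totally bounded. Choose a finite $\varepsilon$-net $y_1,\dots,y_N \in Y$ for $K$, so that for every $y \in K$ there exists $j=j(y)$ with $\|y - y_j\|_Y < \varepsilon$. Pointwise convergence gives an index $n_j$ for each $j$ such that $\|(L_n-L)y_j\|_Z < \varepsilon$ for all $n\ge n_j$. Set $n_0 := \max_{1\le j\le N} n_j$.

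Now, for any $n \ge n_0$, any $i\in I$, and any $x\in X$ with $\|x\|_X\le 1$, choose $j$ with $\|A_i x - y_j\|_Y < \varepsilon$. The triangle inequality gives
\begin{align*}
\|(L_n - L)(A_i x)\|_Z
\le \|L_n - L\|_\infty\,\|A_i x - y_j\|_Y + \|(L_n - L) y_j\|_Z
\le M\varepsilon + \varepsilon.
\end{align*}
Taking the supremum over $x\in B_X$ and over $i\in I$ yields $\sup_{i\in I}\|(L_n - L) A_i\|_\infty \le (M+1)\varepsilon$ for all $n\ge n_0$, which is exactly the claimed uniform convergence. The only genuine subtlety is that a single finite $\varepsilon$-net must serve for \emph{all} operators $A_i$ simultaneously; this is precisely what collective compactness delivers, and without it the argument degenerates to a separate pointwise convergence statement for each fixed $i$.
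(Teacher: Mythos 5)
Your proof is correct and complete: it is the standard Anselone/Kress argument (uniform boundedness of $\{L_n\}$ via Banach--Steinhaus, boundedness of the pointwise limit $L$, total boundedness of the collective image set $K$, and a finite $\varepsilon$-net combined with a triangle-inequality estimate). The paper itself gives no proof of this theorem --- it is quoted verbatim from the cited reference on collectively compact operators --- so your argument simply supplies the textbook proof that the citation points to, with no gaps.
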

Since one compact operator can be viewed as a set containing only one element, we obtain the following. 
\begin{corollary}
Let $X,Y,Z$ be Banach spaces,  and let $L_n:Y\to Z,  \ n \in \N$  be a sequence of operators  that converges pointwise to $L$ for $y\in Y$. Then given a compact operator $A:X\to Y$ we obtain
\begin{align}
        \|(L_n-L) A\|_{\infty} \to 0, \ \ n\to \infty.
    \end{align}
\end{corollary}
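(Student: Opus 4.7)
The plan is to derive the corollary as an immediate specialization of the preceding theorem on collectively compact families, by taking the index set $I$ to be a singleton. First I would verify the (essentially trivial) observation that if $A:X\to Y$ is a single compact operator, then the singleton collection $\mathcal{A}=\{A\}$ is collectively compact in the sense of the definition stated in Appendix \ref{App: Collectively Compact Op}: for any bounded set $U\subset X$, the set $\{A\phi:\phi\in U\}=A(U)$ is relatively compact in $Y$, which is precisely the definition of $A$ being a compact operator.

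Having identified $\mathcal{A}=\{A\}$ as a collectively compact family indexed by $I=\{1\}$, I would then invoke the previous theorem directly. That theorem asserts that, for a pointwise convergent sequence $L_n\to L$ on $Y$, the operator norm convergence $\|(L_n-L)A_i\|_\infty\to 0$ holds uniformly over the index set. Specializing to the singleton case, the uniform supremum over $I$ reduces to the single quantity $\|(L_n-L)A\|_\infty$, which therefore tends to zero as $n\to\infty$. This yields exactly the conclusion of the corollary.

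There is essentially no obstacle here beyond recording the singleton observation, since the statement is a direct corollary of the collectively compact operator theorem invoked just above it; no additional analytic input (such as Banach--Steinhaus uniform boundedness, or an $\varepsilon$-net argument on the relatively compact image $A(\overline{B_X})$) needs to be supplied at this stage, as those tools are precisely what was used inside the proof of the theorem we are applying. The only care required is to match the hypotheses: pointwise convergence of $L_n$ on all of $Y$ (given), and collective compactness of the operator family to which the theorem is applied (which we have just checked for $\{A\}$).
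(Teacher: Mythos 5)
Your proposal is correct and is exactly the paper's argument: the paper introduces the corollary with the remark that ``one compact operator can be viewed as a set containing only one element,'' i.e.\ the singleton $\{A\}$ is collectively compact precisely because $A$ is compact, and the preceding theorem then gives the conclusion. No further comment is needed.
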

\begin{theorem}
\label{th: collectively compact theorem}
    Let \( A : X \to X \) be a compact linear operator on a Banach space \( X \) and let \( I - A \) be injective (thus by Riesz-Fredholm invertible). Assume the sequence of collectively compact operators $ A_n : X \to X, n\in \N $ that converges pointwise \( A_n y \to A y \), for all \( y \in X \). Then for sufficiently large \( n \), more precisely for all \( n \) with
\[
\| (I - A)^{-1} (A_n - A) A_n \|_\infty < 1,
\]
the inverse operators \( (I - A_n)^{-1} : X \to X \) exist and are bounded by
\begin{align}
   \| (I - A_n)^{-1} \|_\infty \leq \frac{1+\|(I+A)^{-1}A_n\|_\infty}{1 - \| (I - A)^{-1} (A_n - A) A_n \|_\infty}. 
\end{align}

For the solutions of the equations
\[
y-A y = f \quad \text{and} \quad y_n - A_n y_n = f_n,
\]
we have the error estimate
\begin{align}
    \| y_n - y \|_X \leq \frac{1+\|(I+A)^{-1}A_n\|_\infty}{1 - \| (I - A)^{-1} (A_n - A) A_n \|_\infty}
\left( \|  (A_n - A) y \|_X + \| f_n - f \|_X \right).
\end{align}
\end{theorem}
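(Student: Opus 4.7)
The plan is to construct $(I-A_n)^{-1}$ via an algebraic factorization that manufactures the operator $(I-A)^{-1}(A_n-A)A_n$, whose norm is controlled by the collectively compact structure even though $\|A_n - A\|_\infty$ itself need not vanish under mere pointwise convergence.

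First I would verify the pivotal identity
\begin{align*}
    [I + (I-A)^{-1}A_n](I - A_n) = I - (I-A)^{-1}(A_n - A)A_n,
\end{align*}
which follows by a direct expansion together with the commutation $A(I-A)^{-1} = (I-A)^{-1}A$ and the relation $(I-A)^{-1} - I = A(I-A)^{-1}$. Writing $T_n := (I-A)^{-1}(A_n - A)A_n$, the preceding theorem on collectively compact operators (applied with $L_n = A_n$ and $L = A$) yields $\|(A_n-A)A_n\|_\infty \to 0$, and composing with the bounded operator $(I-A)^{-1}$ gives $\|T_n\|_\infty \to 0$. Hence, as soon as $\|T_n\|_\infty < 1$, the Neumann series produces $(I-T_n)^{-1}$ with $\|(I-T_n)^{-1}\|_\infty \leq (1-\|T_n\|_\infty)^{-1}$.

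From the identity, $L_n := (I-T_n)^{-1}[I + (I-A)^{-1}A_n]$ is a bounded left inverse of $I - A_n$. Since each $A_n$ is individually compact (as an element of a collectively compact family), $I - A_n$ is Fredholm of index zero, so having a left inverse forces invertibility with $(I-A_n)^{-1} = L_n$. Taking operator norms and applying the triangle inequality to $I + (I-A)^{-1}A_n$ gives
\begin{align*}
\|(I-A_n)^{-1}\|_\infty \leq \frac{1 + \|(I-A)^{-1}A_n\|_\infty}{1 - \|(I-A)^{-1}(A_n-A)A_n\|_\infty},
\end{align*}
where I read the $(I+A)^{-1}$ in the printed bound as a typographical slip for $(I-A)^{-1}$.

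For the error estimate, I would rewrite $A_n y_n - A y = A_n(y_n - y) + (A_n - A) y$ and subtract the two equations to obtain $(I - A_n)(y_n - y) = (A_n - A) y + (f_n - f)$; applying the norm bound on $(I-A_n)^{-1}$ and the triangle inequality yields the claimed inequality. The main obstacle is purely algebraic, namely pinning down the factorization so that the operator slated for Neumann inversion carries a trailing $A_n$; this is what allows its smallness to come from the collectively compact theorem rather than from the generally false norm convergence $A_n \to A$. Once that identity is secured, the rest is standard Neumann‑series plus Fredholm‑alternative bookkeeping.
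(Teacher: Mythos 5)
Your proof is correct and is essentially the canonical argument (the factorization $[I+(I-A)^{-1}A_n](I-A_n)=I-(I-A)^{-1}(A_n-A)A_n$, smallness of the correction term via the preceding collectively-compact theorem, Neumann series, and Riesz theory to upgrade the left inverse to a two-sided inverse); the paper states this result without proof, deferring to \cite{kresslinear}, where exactly this argument appears. Your reading of $(I+A)^{-1}$ in the printed bound as a typographical slip for $(I-A)^{-1}$ is also correct.
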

\begin{corollary}
    \label{cor: colectively compact estimate}
    For sufficiently large $n$, we obtain 
\begin{align}
    \| y_n - y \|_X \leq C
\left( \|  (A_n - A) y \|_X + \| f_n - f \|_X \right),
\label{rel: colectively compact estimate}
\end{align}
with $C$ not depending on $n$.
\end{corollary}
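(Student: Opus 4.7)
The plan is to deduce this corollary directly from Theorem \ref{th: collectively compact theorem} by showing that the explicit prefactor appearing there, namely
\[
C_n := \frac{1 + \|(I-A)^{-1} A_n\|_\infty}{1 - \|(I-A)^{-1}(A_n - A) A_n\|_\infty},
\]
is bounded by a single constant independent of $n$ once $n$ is large enough; the estimate \eqref{rel: colectively compact estimate} will then follow immediately from the error bound in that theorem.

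First I would control the numerator uniformly in $n$. The key observation is that a collectively compact family is, in particular, uniformly bounded in operator norm: applying the definition to the closed unit ball of $X$ yields a relatively compact, hence bounded, set $\{A_n \phi : n \in \mathbb{N},\ \|\phi\|_X \leq 1\}$, so that $\sup_n \|A_n\|_\infty < \infty$. Since $(I-A)^{-1}$ is a bounded operator by Riesz--Fredholm theory, the numerator $1 + \|(I-A)^{-1} A_n\|_\infty$ is uniformly bounded in $n$.

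Next I would show the denominator is bounded away from zero for large $n$. Setting $L_n := (I-A)^{-1} A_n$ and $L := (I-A)^{-1} A$, the pointwise convergence $A_n y \to A y$ together with boundedness of $(I-A)^{-1}$ gives $L_n y \to L y$ pointwise for every $y \in X$. Applying the preceding uniform-convergence theorem in Appendix \ref{App: Collectively Compact Op} with the pointwise convergent sequence $\{L_n\}$ and the collectively compact family $\{A_m\}_{m \in \mathbb{N}}$ yields $\|(L_n - L) A_m\|_\infty \to 0$ uniformly in $m$. Specializing to the diagonal $m = n$ converts this into $\|(I-A)^{-1}(A_n - A) A_n\|_\infty \to 0$, so for $n$ sufficiently large the denominator exceeds, say, $1/2$. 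Combined with the uniform bound on the numerator, this produces a single constant $C$ independent of $n$ with $C_n \leq C$ for all large $n$, as required.

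I expect the main subtlety to lie in the second step: one must apply the collectively compact convergence theorem with $L_n = (I-A)^{-1} A_n$, rather than $A_n$ itself, in the role of the pointwise convergent sequence, and then extract the diagonal $m = n$ from the uniform-in-$m$ conclusion. The remainder is routine bookkeeping around the bound from Theorem \ref{th: collectively compact theorem}.
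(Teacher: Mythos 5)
Your proof is correct and supplies exactly the argument the paper leaves implicit: the corollary is presented as an immediate consequence of Theorem \ref{th: collectively compact theorem}, with the intended justification being precisely that the prefactor there is uniformly bounded for large $n$ --- the numerator via uniform boundedness of the collectively compact family (relative compactness of the image of the unit ball) together with boundedness of $(I-A)^{-1}$, and the denominator via the diagonal specialization $\|(I-A)^{-1}(A_n-A)A_n\|_\infty \to 0$ obtained from the uniform-convergence theorem applied to $L_n=(I-A)^{-1}A_n$. As a side benefit, your write-up also corrects the evident typo $(I+A)^{-1}$ to $(I-A)^{-1}$ in the numerator of the bound stated in Theorem \ref{th: collectively compact theorem}.
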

\section{Extra results \label{App: Extra}}
\begin{proof}[Proof of proposition \ref{prop: q_n to q means q geq 0}]
First, we assume that $2<p<\infty.$ Given a compactly supported smoth function $y\in C^\infty_c(\Omega;[0,\infty)),$ then 
    \begin{align}
       \fa n \in \N : \int_\Omega y (1+q_n) dx \geq 0,
    \end{align}
    hence
    \begin{align}
       \lim _{n\to \infty } \langle y, 1+q_n\rangle_{(L^{p})'}= \lim_n  \int_\Omega y (1+q_n) dx \geq 0 \Ra  \lim _n \langle y, 1+q_n\rangle_{(L^p)'} =\langle y, 1+q\rangle_{(L^{p})'} \geq 0,
    \end{align}
    thus 
    \begin{align}
        \langle y, 1+q\rangle_{(L^{p})'}=   \int_\Omega y(1+ q) dx \geq 0
    \end{align}
    for all $y\geq 0$ test functions. 
    Thus, if $1+q$ were negative on a subset of our domain of positive measure, we could construct a function supported on the same set, and the integral would still turn out to be positive.
    
    
    If $p=\infty$ then it is straightforward to see that the set 
    $ \{q\in L^\infty(\Omega): q\geq -1\}$
    is closed for the topology $\sigma(L^\infty(\Omega),L^1(\Omega))$, see a similar case in \cite[page 85]{lions1971optimal}. The same conclusion can also be reached by applying the arguments used in the case $p\not = \infty$.
\end{proof}

\bibliographystyle{siamplain}
\bibliography{bibliography}
\end{document}